\documentclass[pdflatex,sn-mathphys-ay]{sn-jnl}

\usepackage{graphicx}%
\usepackage{multirow}%
\usepackage{amsmath,amssymb,amsfonts}%
\usepackage{amsthm}%
\usepackage{mathrsfs}%
\usepackage[title]{appendix}%
\usepackage{xcolor}%
\usepackage{textcomp}%
\usepackage{manyfoot}%
\usepackage{booktabs}%
\usepackage{algorithm}%
\usepackage{algorithmicx}%
\usepackage{algpseudocode}%
\usepackage{listings}%

\theoremstyle{thmstyleone}%
\newtheorem{theorem}{Theorem}
\usepackage{amsmath,amssymb,amsfonts}
\usepackage{mathtools}
\usepackage[scr=rsfs]{mathalpha}
\usepackage[title]{appendix}
\usepackage{xcolor}
\usepackage{enumitem}
\usepackage[T1]{fontenc}
\usepackage[utf8]{inputenc}

\definecolor{myred}{RGB}{255,50,50}
\definecolor{myblack}{RGB}{0,0,0}
\definecolor{mygreen}{RGB}{50,200,50}

\newcommand{\N}{\mathbb{N}}

\newcommand{\R}{\mathbb{R}}

\newcommand{\HH}{\R^n}

\newcommand{\inner}[2]{\langle #1,#2\rangle}
\newcommand{\Inner}[2]{\left\langle #1,#2\right\rangle}
\newcommand{\norm}[1]{\|{#1}\|}

\newcommand{\veps}{{\varepsilon}}
\newcommand{\what}[1]{\widehat #1}

\newcommand{\set}[1]{\{#1\}}

\newcommand{\seq}[1]{\left(#1\right)}

\newcommand{\lab}[1]{\label{#1}}
\newcommand{\bprop}{\begin{proposition}}
\newcommand{\eprop}{\end{proposition}}
\newcommand{\blemm}{\begin{lemma}}
\newcommand{\elemm}{\end{lemma}}
\newcommand{\bdefi}{\begin{definition}}
\newcommand{\edefi}{\end{definition}}
\newcommand{\btheo}{\begin{theorem}}
\newcommand{\etheo}{\end{theorem}}
\newcommand{\bproo}{\begin{proof}}
\newcommand{\eproo}{\end{proof}}
\newcommand{\brema}{\begin{remark}}
\newcommand{\erema}{\end{remark}}
\newcommand{\bitem}{\begin{itemize}}
\newcommand{\eitem}{\end{itemize}}
\newcommand{\bassu}{\begin{assumption}}
\newcommand{\eassu}{\end{assumption}}
\newcommand{\bcoro}{\begin{corollary}}
\newcommand{\ecoro}{\end{corollary}}
\newcommand{\benum}{\begin{enumerate}[label = \emph{(\alph*)}]}
\newcommand{\eenum}{\end{enumerate}}

\newcommand{\mgap}{\vspace{.1in}}

\DeclareMathOperator{\dom}{dom}
\DeclareMathOperator{\dist}{dist}
\DeclareMathOperator{\Gap}{Gap}
\DeclareMathOperator{\interior}{int}
\DeclareMathOperator*{\argmin}{argmin}

\DeclareMathOperator{\prox}{\mbox{prox}}

\newtheorem{assumption}{Assumption}
\newtheorem{lemma}[theorem]{Lemma}
\newtheorem{corollary}[theorem]{Corollary}
\newtheorem{proposition}[theorem]{Proposition}
\theoremstyle{thmstyletwo}%
\newtheorem{remark}{Remark}%

\theoremstyle{thmstylethree}%
\newtheorem{definition}{Definition}%

\begin{document}

\title{A Bregman inertial iteratively regularized extragradient method for bilevel variational inequality problems}








\author[1]{\fnm{Kangming} \sur{Chen}}
\email{kangming@tmu.ac.jp}

\author[2]{\fnm{Atsushi} \sur{Hori}}
\email{hori@nitech.ac.jp}

\author*[3]{\fnm{Ellen H.} \sur{Fukuda}}
\email{ellen@i.kyoto-u.ac.jp}

\affil[1]{
  \orgdiv{Faculty of Economics and Business Administration},
  \orgname{Tokyo Metropolitan University},
  \orgaddress{
    \street{1-1 Minami-Osawa},
    \city{Hachioji},
    \postcode{192--0397},
    \state{Tokyo},
    \country{Japan}
  }
}

\affil[2]{
  \orgdiv{Graduate School of Engineering},
  \orgname{Nagoya Institute of Technology},
  \orgaddress{
    \street{Gokiso-cho, Showa-ku},
    \city{Nagoya},
    \postcode{466--8555},
    \country{Japan}
  }
}

\affil*[3]{
  \orgdiv{Graduate School of Informatics},
  \orgname{Kyoto University},
  \orgaddress{
    \street{Yoshida-Honmachi},
    \city{Kyoto},
    \postcode{606--8501},
    \country{Japan}
  }
}


\abstract{In this paper, we consider bilevel variational inequality problems, where the feasible set is the solution set of another variational inequality. We propose a Bregman inertial regularized extragradient method for solving these problems. By performing the inertial extrapolation in the dual space, the proposed method aligns the inertial step with the structure of the Bregman three-point identity.
We also derive explicit non-asymptotic bounds for the gap function. More precisely, using diminishing regularization, we establish convergence rates of $\mathcal{O}(1/k^{1-b})$ for the optimality gap and $\mathcal{O}(1/k^b)$ for the feasibility gap with $ 0 < b< 1$. For constant regularization parameter $\eta>0$, we establish an $\mathcal{O}(1/k)$ rate for the optimality gap and an $\mathcal{O}(1/k)+\mathcal{O}(\eta)$ bound for the feasibility gap.
Furthermore, numerical experiments on traffic networks and multi-portfolio Nash equilibrium problems demonstrate the practical effectiveness of the proposed framework.}


\keywords{Variational inequalities, bilevel optimization, inertial acceleration, Bregman divergence, dual extrapolation}


\pacs[MSC Classification]{47H05, 65K15, 90C33}

\maketitle

\section{Introduction}

We consider the following (bilevel) variational inequality problem:
\begin{align} \lab{eq:prob}
\mbox{Find} \enspace x \in Q \enspace \mbox{such that} \enspace
\Inner{H(x)}{z - x} \geq 0 \enspace \text{for all} \enspace z \in Q, \tag{VIP($H$,$Q$)}
\end{align}
where $ Q $ is the solution set of the following variational inequality:
\begin{align} \lab{eq:def.Q}
 Q \coloneqq \set{x \in X \mid \Inner{F(x)}{y - x} \geq 0 \enspace \text{for all} \enspace y \in X}. \tag{VIP($F$,$X$)}
\end{align}
Problem~\eqref{eq:prob} arises naturally in equilibrium selection. For instance, consider a noncooperative game whose Nash equilibria are characterized as the solution set of a variational inequality \eqref{eq:def.Q}. When the game admits multiple equilibria, it is often desirable to select one satisfying an additional criterion, such as maximizing social welfare
; see, e.g., \cite[Subsection~1.1]{SamYou25}.
Such a secondary selection objective can be modeled through another operator~$H$.
This leads to the problem of finding a solution of the upper-level variational inequality~\eqref{eq:prob}. 
Therefore, bilevel variational inequalities provide a natural framework for equilibrium selection in game theory. 

Regarding methods, a classical approach to solving variational inequality problems is the extragradient method in~\cite{Kor76}. For problem \eqref{eq:prob}, the corresponding update is given by
\begin{align}
  \begin{split} 
    y_k & = P_Q(x_k - \lambda_k H(x_k)), \\ 
    x_{k+1} & = P_Q(x_k - \lambda_k H(y_k)),
  \end{split}
  \lab{eq:kor.intr}
\end{align}
where $\lambda_k > 0$ is the stepsize, and $P_Q(\cdot)$ denotes the orthogonal projection onto~$Q$. However, the hierarchical structure of \eqref{eq:def.Q} makes $P_Q(\cdot)$ difficult to compute since $Q$ itself is a solution set.
To address this issue, a regularization-based strategy is often employed, e.g., \cite{SamYou25};
the method approximates the bilevel structure using a regularized operator $F + \eta_k H$ over the feasible set $X$, whose projection $P_X(\cdot)$ is assumed to be easy to compute. The regularized extragradient update is given by
\begin{align}
\begin{split}
y_k
&= P_X\!\left(x_k-\lambda_k(F(x_k)+\eta_k H(x_k))\right), \\
x_{k+1}
&= P_X\!\left(x_k-\lambda_k(F(y_k)+\eta_k H(y_k))\right),
\end{split}
\label{eq:regkor.intr}
\end{align}
where $\eta_k$ is a regularization parameter.
Recently,~\cite{SamYou25} established iteration complexity bounds for \eqref{eq:regkor.intr} in terms of the dual gap function (Definition~\ref{def:dualgap}) under suitable conditions on~$\lambda_k$ and~$\eta_k$.
Further developments on hierarchical variational inequalities include regularization frameworks and extragradient methods with complexity guarantees (see, e.g.,~\cite{cortild2025regularization,dvurechensky2025extragradient}),
as well as stochastic variance-reduced extragradient methods~(\cite{dvurechensky2026stochastic}).

To accelerate convergence, inertial techniques have been incorporated into algorithms for monotone inclusions and variational inequalities since the seminal work of~\cite{AlvAtt01}.
In standard Euclidean settings, ~\cite{alves2025inertial} introduced an inertial effect in the current iterate $x_k$.
This yields the extrapolated base point:
\begin{align} \label{eq:primal_inertia}
    w_k^{\text{Euc}} = x_k + \alpha_k(x_k - x_{k-1}),
\end{align}
where $\alpha_k \ge 0$. The subsequent extragradient updates are then performed using $w_k^{\text{Euc}}$ instead of $x_k$.

Primal extrapolation like \eqref{eq:primal_inertia} is associated with Euclidean geometry and may not be aligned with the geometry induced by a general Bregman divergence. For this reason, we generalize the inertial regularized extragradient method by replacing the Euclidean projection with a Bregman proximal mapping and by constructing the inertial extrapolation in the corresponding dual space.
Note that Bregman divergence is generated by a strongly convex function $\omega$ that can be selected
according to both the geometry of the feasible set and the structure of the problem. 
Indeed, the Bregman three-point identity and the associated proximal analysis are expressed in terms of gradient differences $\nabla\omega(x)-\nabla\omega(y)$.

Thus, unlike standard approaches that perform linear extrapolation in the primal space as in~\eqref{eq:primal_inertia}, we construct the extrapolated point by aggregating the inertial extrapolation in the dual space induced by the distance-generating function \(\omega\). This idea is used in recent developments on dual-space acceleration and Bregman optimization; see, e.g.~\cite{chen2017accelerated,jolaoso2022analysis,krichene2015accelerated,nesterov2007dual,nesterov2009primal,wang2024mirror}.
To be more specific, we introduce a dual inertial extrapolation step that incorporates momentum through the gradient mapping \(\nabla\omega\), making it compatible with the underlying Bregman geometry.
The dual-extrapolated point $w_k$ is defined by
\begin{align} \label{eq:dual_inertia_intro}
    w_k = \nabla \omega^*((1+\alpha_k)\nabla\omega(x_k) - \alpha_k \nabla\omega(x_{k-1})),
\end{align}
where $\nabla \omega^*$ denotes the gradient of the convex conjugate function $\omega^*$ of $\omega$. 

Using $w_k$ as the proximal center, our proposed \textbf{Bregman Inertial Iteratively Regularized Extragradient}, abbreviated as \textbf{IneIREG-Bregman} method, performs the regularized extragradient updates as follows:
\begin{align}
    y_k &= \underset{u \in X}{\operatorname{argmin}} \left\{ \lambda_k \langle F(w'_k) + \eta_k H(w'_k), u \rangle + D_\omega(u, w_k) \right\}, \label{eq:y_bregman} \\
    x_{k+1} &= \underset{u \in X}{\operatorname{argmin}} \left\{ \lambda_k \langle F(y_k) + \eta_k H(y_k), u \rangle + D_\omega(u, w_k) \right\}, \label{eq:x_bregman}
\end{align}
where $w'_k \coloneqq \text{argmin}_{u \in \Omega} D_\omega(u, w_k)$ is the Bregman projection of $w_k$ onto the set $\Omega$. Here, $\Omega$ is an auxiliary set that contains $X$ and lies within the effective domains of $F$ and $H$, thus ensuring that $F(w'_k)$ and $H(w'_k)$ are well-defined.



Such flexibility allows the distance-generating function to be selected according to both the geometry of the feasible set and the structure of the problem. For instance, when $X$ is the probability simplex, the Kullback--Leibler (KL) divergence provides a natural alternative to the Euclidean distance, particularly when the problem structure is compatible with entropic geometry.
In particular, when the distance-generating function is chosen as the squared Euclidean norm (i.e., $\omega(x) = \frac{1}{2}\|x\|^2$), the gradient mapping becomes the identity function $\nabla \omega(x) = x$.
Thus, the dual extrapolation step in \eqref{eq:dual_inertia_intro} reduces to the standard primal linear extrapolation \eqref{eq:primal_inertia}. In this case,  the \textbf{IneIREG-Bregman} algorithm includes the Euclidean \textbf{IneIREG} method proposed by~\cite{alves2025inertial} as a special case.

\mgap

\noindent
{\bf Main contributions.} 
Our main results and contributions can be summarized as follows:

\begin{itemize}
    \item \textbf{Dual Inertial Extrapolation Framework.} 
    We develop a Bregman-regularized extragradient method for \eqref{eq:prob} using dual momentum to preserve feasible geometry. 
    \item \textbf{Explicit Complexity Bounds.} 
    We give an iteration complexity analysis  using a dual gap function. Under diminishing regularization parameters $\eta_k = \eta_0 / (k+1)^b$ ($b\in(0,1)$), we establish an $\mathcal{O}(1/k^{1-b})$ bound for the optimality gap and an $\mathcal{O}(1/k^b)$ bound for the feasibility gap. Furthermore, under constant regularized parameters $\eta_k\equiv \eta$, we establish an $\mathcal{O}(1/k)$ bound for the optimality gap and an $\mathcal{O}(1/k) + \mathcal{O}(\eta)$ bound for the feasibility gap.

    \item \textbf{Computational Efficiency.} 
    Using a Bregman geometry adapted to the feasible set, the proposed method admits explicit updates and avoids the Euclidean projections required by existing methods. In particular, numerical experiments on traffic network and Nash equilibrium problems illustrate the computational advantages of the proposed method. 
\end{itemize}

\noindent
{\bf Organization of the paper.} 
Section~\ref{sec:pre} introduces the general notation and some preliminaries. 
In Section~\ref{sec:main}, we give our main algorithm, \textbf{IneIREG-Bregman}, together with the basic assumptions on~\eqref{eq:prob}.
Section~\ref{sec:hardy} presents iteration complexity results for our proposed algorithm.
Specialized results for the cases of diminishing and constant regularization parameter sequences are then discussed in Section~\ref{subsec:dimish}. 
Finally, in Section~\ref{sec:ne}, we report some numerical experiments and conclude the paper in Section~\ref{sec:conclusions}.

\section{General notation and preliminaries}
\label{sec:pre}

Let $\HH$ be the $n$-dimensional Euclidean space with the inner product $\inner{\cdot}{\cdot}$ and the induced norm $\norm{\cdot}$. For a matrix $Z$, we use $Z^\top$ to denote its transpose. For any nonempty closed convex set $Y \subseteq \HH$, $P_Y(x)$ denotes the orthogonal projection of $x \in \HH$ onto $Y$.
The distance from the point $x\in\HH$ to the set $Y\subseteq\HH$ is given by $\dist(x, Y) \coloneqq \inf_{y \in Y} \|x - y\|$.
We say that $Y$ is \emph{simple} if $P_Y(\cdot)$ is computationally inexpensive. An operator $F \colon Y \subseteq \HH \to \HH$ is said to be \emph{monotone} on $Y$ if $\inner{F(x) - F(y)}{x - y} \geq 0$ for all $x, y \in Y$, and $L_F$-\emph{Lipschitz continuous} if there exists $L_F>0$ such that $\norm{F(x) - F(y)} \leq L_F \norm{x - y}$ for all $x, y\in Y$.

We denote  the \emph{diameter} and \emph{Bregman diameter} of the compact and convex set $ Y \subset \HH $ as
$D_Y \coloneqq \sup_{x,y\in Y}\,\norm{x - y}$ and
$\Omega_Y^2 \coloneqq \sup_{x,y\in Y}\,D_{\omega}(x, y)$, respectively, where the definition of the Bregman divergence $D_\omega$ is given in Definition~\ref{def:Bregman.divergence}.
Let $\R_+$ be the set of nonnegative real numbers.
For a set $A$, its interior is written as $\interior(A)$. Also, for an extended-valued function $\psi \colon \R^n \to \R \cup \{+\infty\}$, its effective domain is defined by $\dom \psi \coloneqq \{ x \in \HH \mid \psi(x) < +\infty \}$. Moreover, the conjugate of $\psi$ is given by 
$\psi^*(y) \coloneqq \sup_{x \in \dom \psi} (\inner{y}{x} - \psi(x))$.
\begin{assumption}
\label{assu:Hbounded}
The operator $H$ is bounded on both $X$ and $Q$. Equivalently,
\begin{align}\lab{eq:def.ch}
C_H\coloneqq \sup_{x\in X}\,\norm{H(x)} <+\infty\quad \mbox{and} \quad B_H \coloneqq \sup_{x\in Q}\,\norm{H(x)}<+\infty.
\end{align}
\end{assumption}

We also define the following function, which is equivalent to a merit function for VI problems.
\bdefi[Dual gap function] \lab{def:dualgap}\sloppy
The \emph{dual gap function} $\text{Gap}(\cdot, H, Q) \colon \HH \to \R \cup\{+\infty\}$ associated with $\operatorname{VIP}(H, Q)$ is defined~as
\[
\text{Gap}(z, H, Q) \coloneqq \sup_{x\in Q}\,\Inner{H(x)}{z - x}, \quad  z \in \HH.
\]
In the same way, we also define the \emph{dual gap function} $\text{Gap}(\cdot, F, X)$ 
associated with $\operatorname{VIP}(F, X)$.
\edefi
Note that $\Gap(z,H,Q)\ge 0$ for all $z\in Q$, and if $H$ is monotone, then
$\Gap(z, H, Q) = 0$ if and only if $z\in Q$ is a solution to $\operatorname{VIP}(H,Q)$~\cite[Proposition~2.3.15]{Facchinei2004}.
The following result gives a more general lower bound for this gap function.
Its proof follows from~\cite[Theorem 2]{SamYou25} (see also~\cite[Proposition 1.2]{alves2025inertial}). 

\begin{proposition}[{\cite[Theorem~2]{SamYou25}}] \lab{pro:optm02}
For all $ y \in \HH $, we have
\begin{align} \lab{eq:bh}
 \Gap(y, H, Q) \geq -B_H \dist(y, Q).
\end{align}
Moreover, if $ Q $ is $ \sigma $-weakly sharp
 of order $ \mathcal{M} \geq 1 $, 
that is, there exist $\sigma > 0$ and $\mathcal{M} \geq 1$ such that $\langle F(x), y - x \rangle \geq \sigma \big(\operatorname{dist}(y, Q)\big)^\mathcal{M}$ for all $x \in Q$ and $y \in X$,
 then 
\begin{align} \lab{eq:bh02}
0\leq \dist(y, Q) \leq \left( \dfrac{\Gap(y, F, X)}{\sigma}\right)^{\frac{1}{\mathcal{M}}}
\quad \mbox{for all} \quad y \in X.
\end{align}
\end{proposition}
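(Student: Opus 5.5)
Both claims come from short direct arguments, and I would prove them separately.

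For \eqref{eq:bh}, fix $y\in\HH$. We may assume $Q$ is nonempty and closed, so that $\dist(y,Q)$ is attained; a minimizing sequence would otherwise give the same conclusion. Take $\bar y\in Q$ with $\norm{y-\bar y}=\dist(y,Q)$; for instance $\bar y=P_Q(y)$ when $Q$ is closed and convex, which holds for the solution set of a monotone continuous VI over a closed convex $X$. Since $\bar y\in Q$ is an admissible point in the supremum defining $\Gap(y,H,Q)$,
\[
\Gap(y,H,Q)\ \ge\ \Inner{H(\bar y)}{y-\bar y}\ \ge\ -\norm{H(\bar y)}\,\norm{y-\bar y}\ \ge\ -B_H\dist(y,Q).
\]
The second inequality is Cauchy--Schwarz, and the last uses $\norm{H(\bar y)}\le B_H$ from Assumption~\ref{assu:Hbounded}. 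If only a minimizing sequence $\bar y_j\in Q$ is available, the same chain gives $\Gap(y,H,Q)\ge -B_H\norm{y-\bar y_j}$ for every $j$, and letting $j\to\infty$ finishes the argument.

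For \eqref{eq:bh02}, fix $y\in X$. The lower bound $0\le\dist(y,Q)$ is trivial. For any $x\in Q\subseteq X$, weak sharpness gives
\[
\sigma\,\dist(y,Q)^{\mathcal M}\ \le\ \Inner{F(x)}{y-x}\ \le\ \sup_{x'\in X}\Inner{F(x')}{y-x'}\ =\ \Gap(y,F,X).
\]
The middle step holds because $Q\subseteq X$, so the supremum over $X$ dominates the value at $x$. In particular $\Gap(y,F,X)\ge0$. Dividing by $\sigma>0$ and taking the $1/\mathcal M$-th power, which is monotone on $\R_+$, yields the claim.

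The only delicate point is the existence of the nearest point in $Q$ (closedness and nonemptiness of $Q$), and the limiting argument above removes even that. Everything else is Cauchy--Schwarz together with the inclusion $Q\subseteq X$.
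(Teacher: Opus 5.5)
Your proof is correct and takes essentially the same route as the standard argument behind \cite[Theorem~2]{SamYou25}, to which the paper defers instead of giving its own proof. For the first claim, bound the supremum in $\Gap(y,H,Q)$ from below by its value at a nearest point of $Q$ and apply Cauchy--Schwarz with $B_H$; for the second, combine weak sharpness with $Q\subseteq X$, using nonemptiness of $Q$ from Assumption~\ref{assu:erdos}\ref{assu:erdos.qua}.
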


Now we recall the definition of the Bregman distance.
\begin{definition}\label{def:Bregman.divergence}
The \emph{Bregman distance} (or \emph{divergence}) associated with the distance-generating function $\omega \colon \R^n \to \R \cup\{+\infty\}$ is a mapping $D_{\omega} \colon \dom \, \omega \times \interior (\dom \, \omega) \to \mathbb{R}$ defined by
\[
  D_{\omega}(x,y) \coloneqq \omega(x) - \omega(y) - \langle \nabla \omega(y), x-y \rangle.
\]
\end{definition}


Throughout this paper, we assume the following critical regularities on the distance-generating function $\omega$.
\begin{assumption}\label{assump:omega}
The function $\omega \colon \R^n \to \R \cup\{+\infty\}$ satisfies the following properties:
\begin{enumerate}[label=(\alph*)]
    \item \label{assump strong convex}Strong convexity: $\omega$ is $\mu$-strongly convex with respect to the norm $\|\cdot\|$.
    \item Legendre: $\omega$ is a function of Legendre type, i.e., it is proper, closed, essentially smooth\footnote{A function $\omega$ is essentially smooth if it is differentiable on $\interior (\dom \, \omega) \neq \emptyset$, and $\norm{\nabla \omega(x^k)} \to +\infty$ for every $\{ x^k \} \subset \interior (\dom \, \omega)$ that converges to a boundary point of $\dom \, \omega$.} and strictly convex (see, e.g.,~\cite{rockafellar1970convex}). 
\end{enumerate}
\end{assumption}
Note that these assumptions give two properties that are essential for our analysis. First, the $\mu$-strong convexity of $\omega$ implies the coercivity bound $D_\omega(x, y) \ge \frac{\mu}{2}\|x-y\|^2$ for all $x\in \operatorname{dom} \omega$ and $y\in \interior (\dom \omega)$. Second, the smoothness of the Legendre function restricts our dual-extrapolated and proximal iterations to the relative interior of the feasible set. Moreover,
in this case, $\interior (\dom \, \omega)$ coincides with the set of points where $\omega$ is differentiable.

The convergence analysis of Bregman-based proximal methods depends on the following classical three-point identity.

\begin{lemma}[{Three-point identity~\cite[Lemma~3.1]{chen1993convergence}}]\label{3-pointA}
    Let $a, b \in \interior (\dom \, \omega)$ and $c \in \operatorname{dom} \, \omega$. Then, the following equality holds:
    $$\langle\nabla \omega(b)-\nabla \omega(a), c-a\rangle = D_{\omega}(c, a) + D_{\omega}(a, b) - D_{\omega}(c, b).$$
\end{lemma}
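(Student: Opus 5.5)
The identity can be verified by direct expansion of the three Bregman divergences from Definition~\ref{def:Bregman.divergence}. Since $a,b \in \interior(\dom\,\omega)$, the gradients $\nabla\omega(a)$ and $\nabla\omega(b)$ exist. Since $c \in \dom\,\omega$, the values $\omega(c)$, $\omega(a)$, $\omega(b)$ are finite. Hence all three divergences $D_\omega(c,a)$, $D_\omega(a,b)$, $D_\omega(c,b)$ are well-defined real numbers, and only algebra remains.

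Write out the right-hand side:
\begin{align*}
D_\omega(c,a) + D_\omega(a,b) - D_\omega(c,b)
&= \bigl[\omega(c) - \omega(a) - \inner{\nabla\omega(a)}{c-a}\bigr]
 + \bigl[\omega(a) - \omega(b) - \inner{\nabla\omega(b)}{a-b}\bigr] \\
&\quad - \bigl[\omega(c) - \omega(b) - \inner{\nabla\omega(b)}{c-b}\bigr].
\end{align*}
The function values cancel in pairs: $\omega(c)$ with $-\omega(c)$, $-\omega(a)$ with $\omega(a)$, and $-\omega(b)$ with $\omega(b)$. What remains is
\[
-\inner{\nabla\omega(a)}{c-a} - \inner{\nabla\omega(b)}{a-b} + \inner{\nabla\omega(b)}{c-b}.
\]

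The two $\nabla\omega(b)$ terms combine by linearity of the inner product, because $(c-b)-(a-b) = c-a$. This gives
\[
\inner{\nabla\omega(b)}{c-a} - \inner{\nabla\omega(a)}{c-a} = \inner{\nabla\omega(b)-\nabla\omega(a)}{c-a},
\]
which is exactly the left-hand side.

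There is no real obstacle in this argument. The only point needing care is the domain bookkeeping: checking that each divergence is evaluated with its second argument in $\interior(\dom\,\omega)$, where $\nabla\omega$ exists by Assumption~\ref{assump:omega}. This holds because only $a$ and $b$ ever appear in the second slot, while $c$ appears only in the first slot.
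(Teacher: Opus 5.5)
Your proof is correct: expanding the three divergences, cancelling the function values, and combining the two $\nabla\omega(b)$ terms via $(c-b)-(a-b)=c-a$ gives exactly the left-hand side, and your domain bookkeeping is right. The paper gives no proof of its own and simply cites Chen and Teboulle for this lemma; the standard argument for the identity is exactly this direct expansion.
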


\begin{lemma}[{\cite[Lemma 3.2]{chen1993convergence}}]\label{3-pointB} \sloppy
	For any proper closed convex function $\theta \colon \mathbb{R}^{n} \rightarrow \mathbb{R} \cup \{ +\infty \}$ and any $z \in \interior (\dom \, \omega)$, if $\omega$ is differentiable at $z_{+}\coloneqq\underset{x \in \dom \, \omega}{\operatorname{argmin}}\{\theta(x)+D_{\omega}(x, z)\}$, then 
	$$\theta(x)+D_{\omega}(x, z) \geq \theta\left(z_{+}\right)+D_{\omega}\left(z_{+}, z\right)+D_{\omega}\left(x, z_{+}\right) \quad \mbox{for all } x \in \operatorname{dom}\, \omega.$$
\end{lemma}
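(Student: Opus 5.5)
This is the standard optimality-condition argument for Bregman proximal steps. I will write down the first-order optimality condition defining $z_+$ and convert the resulting inner product with the three-point identity (Lemma~\ref{3-pointA}).

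\emph{Step 1: optimality condition.} Since $z_+$ minimizes the convex function $\Phi(x)\coloneqq\theta(x)+D_\omega(x,z)$ over $\dom\,\omega$ and $\omega$ is differentiable at $z_+$, I will apply Fermat's rule together with the subdifferential sum rule. As $x\mapsto D_\omega(x,z)$ is differentiable at $z_+$ with gradient $\nabla\omega(z_+)-\nabla\omega(z)$, this gives
\[
\nabla\omega(z)-\nabla\omega(z_+)\in\partial\theta(z_+).
\]
The sum rule needs a constraint qualification. Under Assumption~\ref{assump:omega}, $z_+\in\interior(\dom\,\omega)$ because $\omega$ is differentiable there, and $\omega$ is essentially smooth. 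This lets me avoid a qualification on $\dom\,\theta$ by a directional argument. For any $x\in\dom\,\omega\cap\dom\,\theta$ and $t\in(0,1)$, minimality gives $\Phi(z_++t(x-z_+))\ge\Phi(z_+)$. Convexity of $\theta$ bounds the $\theta$ part by $t(\theta(x)-\theta(z_+))$, and differentiability of $D_\omega(\cdot,z)$ at $z_+$ handles the smooth part. Dividing by $t$ and letting $t\downarrow0$ yields
\[
\theta(x)-\theta(z_+)\ge\langle\nabla\omega(z)-\nabla\omega(z_+),\,x-z_+\rangle .
\]
If $x\notin\dom\,\theta$, the left-hand side of the claim is $+\infty$ and there is nothing to prove.

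\emph{Step 2: three-point identity.} I will apply Lemma~\ref{3-pointA} with $a=z_+$, $b=z$, $c=x$. This requires $z,z_+\in\interior(\dom\,\omega)$ and $x\in\dom\,\omega$, all of which hold. It gives
\[
\langle\nabla\omega(z)-\nabla\omega(z_+),x-z_+\rangle=D_\omega(x,z_+)+D_\omega(z_+,z)-D_\omega(x,z).
\]
Substituting into the inequality from Step 1 and rearranging yields exactly $\theta(x)+D_\omega(x,z)\ge\theta(z_+)+D_\omega(z_+,z)+D_\omega(x,z_+)$.

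\emph{Main obstacle.} The only delicate point is justifying the variational inequality in Step 1 without a constraint qualification. The directional-derivative argument along the segment $[z_+,x]$ handles this. It uses only convexity of $\dom\,\omega\cap\dom\,\theta$ and differentiability of $\omega$ at $z_+$, which are exactly the hypotheses stated, so I expect no further difficulty.
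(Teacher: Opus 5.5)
Your proof is correct and follows the standard argument behind the cited result; the paper gives no proof of its own and defers to Chen and Teboulle. The optimality of $z_+$ gives the subgradient inequality $\theta(x)-\theta(z_+)\ge\langle\nabla\omega(z)-\nabla\omega(z_+),x-z_+\rangle$, and Lemma~\ref{3-pointA} with $a=z_+$, $b=z$, $c=x$ turns it into the claim. Your segment argument obtains that inequality without a subdifferential sum rule, and it needs only differentiability of $\omega$ at $z_+$ (so $z_+\in\interior(\dom\,\omega)$), not essential smoothness.
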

The three-point identity is a standard tool for analyzing Bregman proximal methods. For a linear function $\theta(x) \coloneqq \langle\xi, x\rangle$, this property recovers important relations of the mirror-prox framework (see, e.g.,  \cite[Lemma 1]{chen2017accelerated}). It characterizes the distances between the update $z_+$, the base point $z$, and a reference point $x$, which will play an important role for our convergence analysis.

\section{The main algorithm } \lab{sec:main}

Recall the bilevel variational inequality problem~\eqref{eq:prob}, 
where $Q$ is the solution set of \eqref{eq:def.Q}.
We assume the following conditions:

\bassu \lab{assu:erdos}
\begin{enumerate}[label=(\alph*)]
\item \lab{assu:erdos.seg}
$ F \colon \dom F \subseteq \HH \to \HH $ and $ H \colon \dom H \subseteq \HH \to \HH $ are monotone and Lipschitz continuous mappings with constants $ L_F > 0 $ and $ L_H > 0 $, respectively. 
\item \lab{assu:erdos.ter} 
$ X $ and $ \Omega $ are simple nonempty closed and convex subsets of $ \HH $ such that 
$ X \subset \Omega \subseteq \dom \omega \subseteq \dom F \cap \dom H$.
\item \lab{assu:erdos.qua}
The solution set $ Q $ of $ \text{VIP}(F, X) $ is nonempty.
\end{enumerate}
\eassu

To set the stage for our  framework, we first recall the inertial iteratively regularized extragradient (IneIREG) method recently introduced by~\cite{alves2025inertial} for solving \eqref{eq:prob}.

\begin{algorithm}[ht]
\caption{IneIREG (\cite{alves2025inertial})}
\label{alg:main}
\begin{algorithmic}[1]

\Require Initial guess $x_0=x_{-1}\in X$.

\For{$k=0,1,\dots$}

    \State Choose an inertial parameter $\alpha_k>0$, and set
    \begin{align}
    \label{eq:def.wk}
        w_k \coloneqq x_k+\alpha_k(x_k-x_{k-1}).
    \end{align}

    \State Choose a stepsize $\lambda_k>0$ and a regularization parameter
    $\eta_k>0$.

    \State Set
    \[
        w'_k \coloneqq P_\Omega(w_k).
    \]

    \State Compute
    \begin{align}
    \label{eq:yx}
    \begin{aligned}
        y_k
        &=
        P_X\Bigl(
        w_k-\lambda_k
        \bigl(
        F(w'_k)+\eta_k H(w'_k)
        \bigr)
        \Bigr),
        \\[2mm]
        x_{k+1}
        &=
        P_X\Bigl(
        w_k-\lambda_k
        \bigl(
        F(y_k)+\eta_k H(y_k)
        \bigr)
        \Bigr).
    \end{aligned}
    \end{align}

\EndFor

\end{algorithmic}
\end{algorithm}
While Algorithm~\ref{alg:main} has good theoretical properties, computing the Euclidean projection $P_X$ can be difficult depending on the structure of~$X$. To overcome this difficulty, we replace the Euclidean projection with a Bregman proximal mapping.
Using this mapping together with the inertial extrapolation step~\eqref{eq:dual_inertia_intro}, we obtain the proposed algorithm shown in Algorithm~\ref{alg:bregman}. As usual, we assume that the associated Bregman proximal mapping is computationally tractable.

\begin{algorithm}[htbp]
\caption{IneIREG-Bregman}
\label{alg:bregman}
\begin{algorithmic}[1]

\Require Initial guess
$x_0=x_{-1}\in X\cap\operatorname{int}(\operatorname{dom}\omega)$
and a distance-generating function $\omega$ with its conjugate $\omega^*$.

\For{$k=0,1,\dots$}

    \State Choose an inertial parameter $\alpha_k\ge 0$.

    \State Compute the extrapolated point in the dual space and map it back:
    \begin{equation}
    \label{eq:dual_inertial_step}
        w_k
        \coloneqq
        \nabla\omega^*
        \bigl(
        (1+\alpha_k)\nabla\omega(x_k)
        -
        \alpha_k\nabla\omega(x_{k-1})
        \bigr).
    \end{equation}

    \State Choose a stepsize $\lambda_k>0$ and a regularization parameter
    $\eta_k>0$.

    \State Set $w'_k$ via the Bregman projection:
    \begin{equation}
    \label{eq:bregman_proj}
        w'_k
        \coloneqq
        \operatorname*{argmin}_{u\in\Omega}
        D_\omega(u,w_k).
    \end{equation}

    \State Compute the intermediate and next iterates:
    \begin{align}
        y_k
        &=
        \operatorname{prox}^{\omega}_{
        \lambda_k(F(w'_k)+\eta_k H(w'_k))}
        (w_k),
        \label{eq:yxxk1}
        \\
        x_{k+1}
        &=
        \operatorname{prox}^{\omega}_{
        \lambda_k(F(y_k)+\eta_k H(y_k))}
        (w_k).
        \label{eq:yxxk2}
    \end{align}

\EndFor

\end{algorithmic}
\end{algorithm}

\mgap

Here, the Bregman proximal operator, denoted as $\operatorname{prox}^{\omega}_{\xi}(z)$ is defined as the unique solution to the following minimization problem:
\begin{equation}\label{pro proxi2}
    \operatorname{prox}^{\omega}_{\xi}(z) \coloneqq \argmin_{w \in X} \left\{ \langle \xi, w \rangle + D_{\omega}(w, z) \right\}.
\end{equation}
For instance, utilizing this operator, the intermediate update $y_k$ in Algorithm~\ref{alg:bregman} can be written as
\begin{equation*}
    y_k = \argmin_{w \in X} \left\{ \langle \lambda_k (F(w'_k) + \eta_k H(w'_k)), w \rangle + D_{\omega}(w, w_k) \right\}.
\end{equation*}

    Since $\omega$ is globally $\mu$-strongly convex from Assumption~\ref{assump:omega}\ref{assump strong convex}, $\omega$ is supercoercive (1-coercive), and hence $\operatorname{dom}\omega^*=\mathbb R^n$ by~\cite[Proposition~2.16]{bauschke_borwein1997}.
    Thus, for every $k\ge0$,
    \[
    (1+\alpha_k)\nabla\omega(x_k)
    -\alpha_k\nabla\omega(x_{k-1})
    \in
    \operatorname{int}(\operatorname{dom}\omega^*).
    \]
    Since $\omega$ is Legendre, $\nabla \omega \colon \mbox{int}(\mbox{dom } \omega) \to \mbox{int}(\mbox{dom } \omega^*)$ and
    $(\nabla\omega)^{-1}=\nabla\omega^*$.
    Therefore,
    \[
    w_k
    =
    \nabla\omega^*\!\left(
    (1+\alpha_k)\nabla\omega(x_k)
    -\alpha_k\nabla\omega(x_{k-1})
    \right)
    \in
    \operatorname{int}(\operatorname{dom}\omega),
    \]
    and hence $D_\omega(\cdot,w_k)$ is well-defined.

The projection step in \eqref{eq:bregman_proj} serves as a safeguard ensuring that the regularized operator evaluation $F(w'_k)+\eta_k H(w'_k)$ is well-defined. Indeed, $w_k$ is obtained via the map $\nabla\omega^*$ in \eqref{eq:dual_inertial_step}, and $\Omega \subseteq \dom \omega \subseteq \dom F \cap \dom H$ from Assumption~\ref{assu:erdos}~\ref{assu:erdos.ter}. From the same assumption, $\Omega$ is  a simple set, so the additional projection incurs only minor computational cost. Moreover,~\cite[Theorem~3.12(iii)]{bauschke_borwein1997} ensures that $w_k'=\argmin_{u\in\Omega}D_\omega(u,w_k)$ exists uniquely and belongs to $\Omega\cap\operatorname{int}(\operatorname{dom}\omega)$.

Now, to ensure that all the Bregman proximal steps in Algorithm~\ref{alg:bregman} are well-defined, we give the following lemma,
which generalizes~\cite[Theorem~3.12]{bauschke_borwein1997}.
\begin{lemma}
\label{lem:prox_well_defined}
Let $C\subseteq\mathbb R^n$ be a nonempty closed convex set satisfying
$C\cap\operatorname{int}(\operatorname{dom}\omega)\neq\emptyset$.
Suppose that $\omega$ satisfies \emph{Assumption~\ref{assump:omega}}.
Then, for every
$z\in\operatorname{int}(\operatorname{dom}\omega)$
and $\xi\in\mathbb R^n$, the problem
\begin{align}\label{prob:prox.comput}
\min_{u\in C\cap\operatorname{dom}\omega}
\left\{
\langle\xi,u\rangle+D_\omega(u,z)
\right\}
\end{align}
admits a unique minimizer $\bar u$, with
$\bar u\in C\cap\operatorname{int}(\operatorname{dom}\omega)$.
\end{lemma}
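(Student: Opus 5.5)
The plan is to prove existence by a coercivity and lower semicontinuity argument, uniqueness by strict convexity, and interiority from essential smoothness.

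Set $\phi(u) \coloneqq \langle\xi,u\rangle + D_\omega(u,z) + \iota_C(u)$, where $\iota_C$ is the indicator of $C$. Up to an additive constant depending only on $z$,
\[
\phi(u)=\omega(u)+\langle \xi-\nabla\omega(z),u\rangle+\iota_C(u).
\]
This is a proper closed convex function. It is proper because $C\cap\dom\omega\supseteq C\cap\interior(\dom\omega)\neq\emptyset$. It is closed because $\omega$ is closed, the linear term is continuous, and $C$ is closed.

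\emph{Existence.} By Assumption~\ref{assump:omega}\ref{assump strong convex}, $\omega$ is $\mu$-strongly convex. Hence, for $z\in\interior(\dom\omega)$,
\[
\omega(u)\ge \omega(z)+\langle\nabla\omega(z),u-z\rangle+\tfrac{\mu}{2}\|u-z\|^2 .
\]
It follows that $\phi(u)\ge \tfrac{\mu}{2}\|u-z\|^2-\|\xi\|\,\|u\|-c$ for some constant $c$. So $\phi$ is coercive, its sublevel sets are bounded and closed, and a minimizer $\bar u$ exists. All minimizers lie in $C\cap\dom\omega$, since $\phi=+\infty$ elsewhere.

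\emph{Uniqueness.} $\phi$ is strictly (indeed strongly) convex on its domain, because $\omega$ is and the other terms are convex. Two distinct minimizers would give a strictly smaller value at their midpoint, which is a contradiction.

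\emph{Interiority.} This is the main obstacle, and it is where essential smoothness enters. Suppose $\bar u\in\operatorname{bd}(\dom\omega)$. Pick $u_0\in C\cap\interior(\dom\omega)$ and set $u_t \coloneqq \bar u+t(u_0-\bar u)$ for $t\in(0,1]$. Each $u_t$ lies in $C$ by convexity, and lies in $\interior(\dom\omega)$ by the line segment principle. I will use the standard fact that an essentially smooth convex function satisfies
\[
\lim_{t\downarrow0}\langle\nabla\omega(u_t),u_0-\bar u\rangle=-\infty
\]
along such a segment; see \cite[Lemma~26.2]{rockafellar1970convex}. Then $t\mapsto\phi(u_t)$ is convex on $[0,1]$ and differentiable on $(0,1]$, with derivative
\[
\langle \xi-\nabla\omega(z)+\nabla\omega(u_t),\,u_0-\bar u\rangle,
\]
which tends to $-\infty$ as $t\downarrow0$. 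In particular it is negative for small $t$. By convexity, $\phi(u_t)-\phi(\bar u)\le t\cdot\frac{d}{ds}\phi(u_s)|_{s=t}<0$ for small $t>0$. This contradicts the minimality of $\bar u$, so $\bar u\in C\cap\interior(\dom\omega)$.

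If citing Rockafellar's lemma is undesirable, I would instead argue directly: essential smoothness gives $\|\nabla\omega(u_t)\|\to\infty$. The monotone scalar function $g(t)=\langle\nabla\omega(u_t),u_0-\bar u\rangle$ is nondecreasing in $t$, so it has a limit in $[-\infty,\infty)$ as $t\downarrow 0$. One then rules out a finite limit using the subdifferential characterization $\partial\omega(\bar u)=\emptyset$ for Legendre functions at boundary points. This step is the delicate one.
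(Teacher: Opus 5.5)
Your proposal is correct and follows essentially the same route as the paper: coercivity from $D_\omega(u,z)\ge\frac{\mu}{2}\|u-z\|^2$ plus closedness for existence, strict convexity for uniqueness, and the segment $u_t=\bar u+t(u_0-\bar u)$ combined with \cite[Lemma~26.2]{rockafellar1970convex} to drive the derivative of $t\mapsto\phi(u_t)$ to $-\infty$ and contradict the minimality of $\bar u$. Your convexity estimate $\phi(u_t)-\phi(\bar u)\le t\,\frac{d}{ds}\phi(u_s)|_{s=t}$ supplies the justification that the paper leaves implicit when it asserts $\phi(t)<\phi(0)$ for small $t$; the fallback argument in your last paragraph is not needed.
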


\begin{proof}
Since $\omega$ is $\mu$-strongly convex, namely,
$
D_\omega(u,z)\ge\frac{\mu}{2}\|u-z\|^2,
$
$u\mapsto \langle\xi,u\rangle+D_\omega(u,z)$
is coercive and lower semicontinuous in the closed set $C$;
hence, there is a minimizer, and its uniqueness follows from strong convexity.

It remains to prove that $\bar{u} \in \operatorname{int}(\operatorname{dom}\omega)$.
Suppose to the contrary that the minimizer
$\bar u$ of~\eqref{prob:prox.comput} satisfies
$\bar u\in C\cap(\operatorname{dom}\omega\setminus
\operatorname{int}(\operatorname{dom}\omega))$.
Choose
$u^\circ\in C\cap\operatorname{int}(\operatorname{dom}\omega)$
and set
$$
u_t=(1-t)\bar u+t u^\circ,\qquad t\in(0,1].
$$
Then, we have $u_t\in C\cap\operatorname{int}(\operatorname{dom}\omega)$.
By essential smoothness~\cite[Lemma~26.2]{rockafellar1970convex}, 
$$
\left\langle\nabla\omega(u_t),u^\circ-\bar u\right\rangle
\to-\infty
$$
when $t\downarrow0$. Therefore, by defining
$\phi(t) \coloneqq \langle\xi,u_t\rangle+D_\omega(u_t,z)$,
we have 
$$\phi'(t)
=
\left\langle
\xi+\nabla\omega(u_t)-\nabla\omega(z),
u^\circ-\bar u
\right\rangle
\to-\infty$$
when $t\downarrow0$. This shows that
$\phi(t)<\phi(0)$ for all sufficiently small $t>0$, contradicting the optimality of $\bar u$.
Thus, we have
$\bar u\in C\cap\operatorname{int}(\operatorname{dom}\omega)$.
\end{proof}

These observations yield the well-definedness of Algorithm~\ref{alg:bregman}.
\begin{proposition} 
\label{prop:algorithm_well_defined}
Suppose that \emph{Assumptions~\ref{assump:omega} and~\ref{assu:erdos}}
hold.
Then, \emph{Algorithm~\ref{alg:bregman}} is well-defined for every $k\ge0$,
i.e.,
$$
w_k\in\operatorname{int}(\operatorname{dom}\omega),
\quad
w'_k\in
\Omega\cap\operatorname{int}(\operatorname{dom}\omega),
\quad \mbox{and} \quad
y_k,x_{k+1}
\in
X\cap\operatorname{int}(\operatorname{dom}\omega).
$$
\end{proposition}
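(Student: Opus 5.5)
We argue by induction on $k$, proving that at each iteration the current pair $x_k, x_{k-1}$ lies in $X\cap\operatorname{int}(\operatorname{dom}\omega)$ and that this property passes to $x_{k+1}$. The base case is the initialization $x_0=x_{-1}\in X\cap\operatorname{int}(\operatorname{dom}\omega)$ required in Algorithm~\ref{alg:bregman}.

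\textbf{Step 1: $w_k$.} Suppose $x_k,x_{k-1}\in X\cap\operatorname{int}(\operatorname{dom}\omega)$. Then $\nabla\omega(x_k)$ and $\nabla\omega(x_{k-1})$ are defined. By strong convexity, $\operatorname{dom}\omega^*=\mathbb R^n$, so the point $(1+\alpha_k)\nabla\omega(x_k)-\alpha_k\nabla\omega(x_{k-1})$ lies in $\operatorname{int}(\operatorname{dom}\omega^*)$. By the Legendre property, $\nabla\omega^*=(\nabla\omega)^{-1}$ maps $\operatorname{int}(\operatorname{dom}\omega^*)$ onto $\operatorname{int}(\operatorname{dom}\omega)$. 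Hence $w_k\in\operatorname{int}(\operatorname{dom}\omega)$, as already noted in the discussion preceding Lemma~\ref{lem:prox_well_defined}.

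\textbf{Step 2: $w_k'$.} Apply Lemma~\ref{lem:prox_well_defined} with $C=\Omega$, $\xi=0$ and $z=w_k$. The required hypothesis is $\Omega\cap\operatorname{int}(\operatorname{dom}\omega)\neq\emptyset$. It holds because $x_k\in X\subset\Omega$ and $x_k\in\operatorname{int}(\operatorname{dom}\omega)$. Since $\Omega\subseteq\operatorname{dom}\omega$, the minimization over $\Omega\cap\operatorname{dom}\omega$ coincides with \eqref{eq:bregman_proj}. The lemma then gives a unique $w'_k\in\Omega\cap\operatorname{int}(\operatorname{dom}\omega)$. By Assumption~\ref{assu:erdos}\ref{assu:erdos.ter}, $w'_k\in\dom F\cap\dom H$, so the vector $\xi_k:=\lambda_k(F(w'_k)+\eta_kH(w'_k))\in\mathbb R^n$ is well-defined.

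\textbf{Step 3: $y_k$ and $x_{k+1}$.} Apply Lemma~\ref{lem:prox_well_defined} with $C=X$, $z=w_k$ and $\xi=\xi_k$. The hypothesis $X\cap\operatorname{int}(\operatorname{dom}\omega)\neq\emptyset$ again holds because $x_k$ belongs to this set. Since $X\subseteq\dom\omega$, problem \eqref{pro proxi2} is exactly \eqref{prob:prox.comput}. The lemma therefore gives a unique $y_k\in X\cap\operatorname{int}(\operatorname{dom}\omega)$. Then $y_k\in X\subseteq\dom F\cap\dom H$, so $\lambda_k(F(y_k)+\eta_kH(y_k))$ is defined. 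A second application of the lemma with this $\xi$ yields a unique $x_{k+1}\in X\cap\operatorname{int}(\operatorname{dom}\omega)$. This closes the induction.

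There is no real obstacle. The only delicate points are bookkeeping: the nonemptiness hypothesis of Lemma~\ref{lem:prox_well_defined} must be supplied by the induction hypothesis on $x_k$, and the domain inclusions $X\subset\Omega\subseteq\dom\omega\subseteq\dom F\cap\dom H$ must be used to confirm that every operator evaluation makes sense and that $C\cap\operatorname{dom}\omega=C$.
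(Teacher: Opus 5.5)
Your proof is correct and follows the paper's argument. The paper simply invokes Lemma~\ref{lem:prox_well_defined} together with the preceding discussion, namely $\operatorname{dom}\omega^*=\mathbb{R}^n$ and $\nabla\omega^*=(\nabla\omega)^{-1}$, which give $w_k$. The differences are cosmetic: you make the induction on $x_k,x_{k-1}$ explicit, which supplies the nonemptiness hypothesis of the lemma and which the paper leaves implicit, and you obtain $w'_k$ from the lemma with $\xi=0$, $C=\Omega$, whereas the paper cites Bauschke--Borwein's Theorem~3.12(iii).
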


\begin{proof}
  It holds from Lemma~\ref{lem:prox_well_defined} and the discussions before it. 
\end{proof}

\section{Convergence analysis}\label{sec:hardy}

In this section, we present the global convergence and iteration complexity analysis of the proposed \textbf{IneIREG-Bregman} algorithm. In Section~\ref{subsec:general_convergence}, we first establish a single step descent inequality and estimate the error caused by the dual inertial extrapolation. Using these results, we derive non-asymptotic convergence rates for two settings in Section~\ref{subsec:dimish}: diminishing and constant regularization $\eta_k$.
In these cases, we establish a worst-case global iteration complexity of $\mathcal{O}(1/\epsilon^2)$ to achieve an $\epsilon$-approximate solution.

\subsection{General convergence properties}
\label{subsec:general_convergence}
We first derive a generalized three-point inequality specific to the regularized proximal updates of our method.

\begin{proposition}\label{prop 3point}
Let the sequences $(w_k)$, $(w'_k)$, $(y_k)$, and $(x_{k+1})$ be generated by \emph{Algorithm~\ref{alg:bregman}}. Moreover, let $G_k(\cdot) \coloneqq F(\cdot) + \eta_k H(\cdot)$ and its Lipschitz constant be $L_k \coloneqq L_F + \eta_k L_H$. Then, for all $x \in X$ and $k \ge 0$, the following inequality holds:
\begin{equation}\label{ieq:three.point}
    \lambda_k \langle G_k(y_k), y_k - x \rangle \le  D_\omega(x, w_k)  - D_\omega(x, x_{k+1}) - \left(1 - \frac{\lambda_k^2 L_k^2}{\mu^2}\right)D_\omega(y_k, w_k).
\end{equation}
\end{proposition}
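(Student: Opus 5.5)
The plan is the standard mirror-prox (Bregman extragradient) argument. We apply Lemma~\ref{3-pointB} twice, once for each proximal step, add the two inequalities, and control the cross term with the Lipschitz continuity of $G_k$. The one nonstandard feature is that $y_k$ is computed with the operator evaluated at $w'_k$, not at the prox center $w_k$. So the Lipschitz estimate produces $\|y_k-w'_k\|$, and this must be converted back into $D_\omega(y_k,w_k)$.

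\textbf{Step 1: the $x_{k+1}$ step.} Take $\theta(u)=\lambda_k\langle G_k(y_k),u\rangle+\iota_X(u)$, which is proper, closed and convex, and take $z=w_k$. Then $z_+=x_{k+1}$. By Proposition~\ref{prop:algorithm_well_defined}, $x_{k+1}\in\interior(\dom\omega)$, so $\omega$ is differentiable there. Lemma~\ref{3-pointB} at any $x\in X$ gives
\[
\lambda_k\langle G_k(y_k),x-x_{k+1}\rangle+D_\omega(x,w_k)\ge D_\omega(x_{k+1},w_k)+D_\omega(x,x_{k+1}).
\]

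\textbf{Step 2: the $y_k$ step.} Take $\theta(u)=\lambda_k\langle G_k(w'_k),u\rangle+\iota_X(u)$ and $z=w_k$, so that $z_+=y_k$. Evaluating Lemma~\ref{3-pointB} at $x=x_{k+1}\in X$ gives
\[
\lambda_k\langle G_k(w'_k),x_{k+1}-y_k\rangle+D_\omega(x_{k+1},w_k)\ge D_\omega(y_k,w_k)+D_\omega(x_{k+1},y_k).
\]

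\textbf{Step 3: combine.} Adding the two inequalities and rearranging, the terms $D_\omega(x_{k+1},w_k)$ cancel, and we obtain
\[
\lambda_k\langle G_k(y_k),y_k-x\rangle\le D_\omega(x,w_k)-D_\omega(x,x_{k+1})-D_\omega(y_k,w_k)-D_\omega(x_{k+1},y_k)+\lambda_k\langle G_k(y_k)-G_k(w'_k),x_{k+1}-y_k\rangle.
\]

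\textbf{Step 4: the error term, which is the main obstacle.} By Cauchy--Schwarz, the $L_k$-Lipschitz continuity of $G_k$, and Young's inequality,
\[
\lambda_k\langle G_k(y_k)-G_k(w'_k),x_{k+1}-y_k\rangle\le \frac{\lambda_k^2L_k^2}{2\mu}\|y_k-w'_k\|^2+\frac{\mu}{2}\|x_{k+1}-y_k\|^2 .
\]
The second term is at most $D_\omega(x_{k+1},y_k)$ by strong convexity, so it cancels against the corresponding negative term in Step~3. For the first term, strong convexity gives $\frac{\mu}{2}\|y_k-w'_k\|^2\le D_\omega(y_k,w'_k)$. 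It then remains to compare $D_\omega(y_k,w'_k)$ with $D_\omega(y_k,w_k)$.

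\textbf{Step 5: the Bregman Pythagorean inequality.} Apply Lemma~\ref{3-pointB} with $\theta=\iota_\Omega$ and $z=w_k$, so that $z_+=w'_k\in\interior(\dom\omega)$. At the point $y_k\in X\subset\Omega$ this yields
\[
D_\omega(y_k,w_k)\ge D_\omega(w'_k,w_k)+D_\omega(y_k,w'_k)\ge D_\omega(y_k,w'_k).
\]
Hence the first term in Step~4 is bounded by $\frac{\lambda_k^2L_k^2}{\mu^2}D_\omega(y_k,w_k)$.

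Substituting these bounds into Step~3 gives exactly \eqref{ieq:three.point}. The only delicate points are the projection comparison in Step~5 and checking the differentiability hypotheses of Lemma~\ref{3-pointB}. The latter are supplied by Lemma~\ref{lem:prox_well_defined} and Proposition~\ref{prop:algorithm_well_defined}.
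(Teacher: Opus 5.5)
Your proposal is correct and follows essentially the same route as the paper: two applications of Lemma~\ref{3-pointB} (one per proximal step), a Cauchy--Schwarz/Lipschitz/Young bound on the cross term, and the Bregman Pythagorean inequality to pass from $D_\omega(y_k,w'_k)$ to $D_\omega(y_k,w_k)$. The only differences are that you derive the Pythagorean inequality from Lemma~\ref{3-pointB} with $\theta=\iota_\Omega$ instead of citing Bauschke--Borwein, and that the cross term in your Step~3 has a sign slip: it should read $\lambda_k\langle G_k(y_k)-G_k(w'_k),y_k-x_{k+1}\rangle$, which is harmless because Step~4 bounds it through Cauchy--Schwarz.
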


\begin{proof}
Let $z_+ \coloneqq \text{argmin}_{u \in X} \{ \langle \xi, u \rangle + D_\omega(u, z) \}$.
It follows from Lemma~\ref{3-pointB} that for all $x \in \operatorname{dom} \omega$,
\begin{equation}\label{eq:3point}
    \langle \xi, z_+ - x \rangle \le D_{\omega}(x, z) - D_{\omega}\left(z_{+}, z \right)-D_{\omega}\left(x, z_{+}\right).
\end{equation}

Now we decompose the term $\lambda_k \langle G_k(y_k), y_k - x \rangle$ as follows:
\begin{equation}\label{eq:gap.ip.decomp}
    \lambda_k \langle G_k(y_k), y_k - x \rangle = \lambda_k \langle G_k(y_k), y_k - x_{k+1} \rangle + \lambda_k \langle G_k(y_k), x_{k+1} - x \rangle,
\end{equation}
and it suffices to show the upper bound of the right-hand side of the equality.

For the second term of the right-hand side of~\eqref{eq:gap.ip.decomp},
applying~\eqref{eq:3point} to $z=w_k$, $\xi = \lambda_k G_k(y_k)$ and $z_+=x_{k+1}$, we obtain
\begin{align}\label{ieq:gap.ip.step1}
    \lambda_k \langle G_k(y_k), x_{k+1} - x \rangle \le D_\omega(x, w_k) - D_\omega( x_{k+1}, w_k) - D_\omega(x, x_{k+1}).
\end{align}

For the first term of the right-hand side of~\eqref{eq:gap.ip.decomp}, this term is rewritten as follows:
\[
    \lambda_k \langle G_k(y_k), y_k - x_{k+1} \rangle = \lambda_k \langle G_k(w'_k), y_k - x_{k+1} \rangle + \lambda_k \langle G_k(y_k) - G_k(w'_k), y_k - x_{k+1} \rangle.
\]
Similarly to~\eqref{ieq:gap.ip.step1}, the first term of the right-hand side of the above equation is bounded as follows by using~\eqref{eq:3point} with $z=w_k$, $\xi=\lambda_k G_k(w'_k)$, $z_+=y_k$, and $x=x_{k+1}$:
\[
    \lambda_k \langle G_k(w'_k), y_k - x_{k+1} \rangle \le D_\omega( x_{k+1}, w_k) - D_\omega( y_k, w_k) - D_\omega(x_{k+1}, y_k).
\]
Hence, we obtain
\begin{align}\label{ieq:gap.ip.step2}
\begin{aligned}
\lambda_k\langle G_k(y_k),y_k-x_{k+1}\rangle\le & D_\omega(x_{k+1},w_k)-D_\omega(y_k,w_k)-D_\omega(x_{k+1},y_k)\\
& +\lambda_k\langle G_k(y_k) - G_k(w'_k), y_k - x_{k+1} \rangle.
\end{aligned}
\end{align}

To bound the inner product term in the right-hand side of~\eqref{ieq:gap.ip.step2},
we successively apply the Cauchy--Schwarz inequality, the $L_k$-Lipschitz continuity of~$G_k$,
and Young's inequality (i.e., $ab\le a^2/(2\mu) + \mu b^2/2$) as follows:
\begin{align*}
    \lambda_k \langle G_k(y_k) - G_k(w'_k), y_k - x_{k+1} \rangle &\le \lambda_k \|G_k(y_k) - G_k(w'_k)\| \, \|y_k - x_{k+1}\| \\
    &\le \lambda_k L_k \|y_k - w'_k\| \|y_k - x_{k+1}\| \\
    &\le \frac{\lambda_k^2 L_k^2}{2\mu} \|y_k - w'_k\|^2 + \frac{\mu}{2}\|y_k - x_{k+1}\|^2\\
    & \le \frac{\lambda_k^2 L_k^2}{\mu^2} D_\omega(y_k,w'_k) + D_\omega(x_{k+1},y_k),
\end{align*}
where the last inequality follows from the $\mu$-strong convexity of $\omega$, as well as $D_\omega(u,v)\ge \mu/2\|u-v\|^2$.
Now, recalling~\eqref{eq:bregman_proj}, since $w'_k = \text{argmin}_{u \in \Omega} D_\omega(u, w_k)$ is the Bregman projection of $w_k$ onto $\Omega$, and $y_k \in X \subset \Omega$, the generalized Pythagorean theorem for Bregman divergences asserts that $D_\omega(y_k, w'_k) + D_\omega(w'_k, w_k) \le D_\omega(y_k, w_k)$ (see \cite[Proposition 3.16]{bauschke_borwein1997}).
Since the Bregman divergence is nonnegative, we immediately obtain $D_\omega(y_k, w'_k) \le D_\omega(y_k, w_k)$. Substituting this into the above inequality, we get
\[
    \lambda_k \langle G_k(y_k) - G_k(w'_k), y_k - x_{k+1} \rangle \le \frac{\lambda_k^2 L_k^2}{\mu^2} D_\omega( y_k, w_k) + D_\omega(x_{k+1}, y_k).
\]
Thus,~\eqref{ieq:gap.ip.step2} is bounded as follows:
\begin{align}
& \, \lambda_k\langle G_k(y_k),y_k-x_{k+1}\rangle \nonumber \\ 
\le & \, D_\omega(x_{k+1},w_k)-D_\omega(y_k,w_k)-D_\omega(x_{k+1},y_k)\notag \\
& + \frac{\lambda_k^2 L_k^2}{\mu^2} D_\omega(y_k,w'_k) + D_\omega(x_{k+1},y_k)\notag \\
\le& \, D_\omega( x_{k+1}, w_k) - D_\omega( y_k, w_k) + \frac{\lambda_k^2 L_k^2}{\mu^2} D_\omega( y_k, w_k).\label{ieq:gap.ip.step3}
\end{align}

Combining~\eqref{ieq:gap.ip.step1} and~\eqref{ieq:gap.ip.step3} with~\eqref{eq:gap.ip.decomp} leads to
\begin{align*}
    \lambda_k \langle G_k(y_k), y_k - x \rangle &\le \left[ D_\omega( x_{k+1}, w_k) - D_\omega( y_k, w_k) + \frac{\lambda_k^2 L_k^2}{\mu^2} D_\omega( y_k, w_k) \right] \\
    &\quad + D_\omega(x, w_k) - D_\omega( x_{k+1}, w_k) - D_\omega(x, x_{k+1}).
\end{align*}
Rearranging the remaining terms yields the result. 
\end{proof}

\begin{lemma}[Dual Bregman Inertial Lemma]
\label{lem:dual_bregman_inertia}
Let $x_k$ and $x_{k-1}$ be any two points in $\interior(\dom\omega)$, and let the extrapolated point $w_k$ be defined via dual extrapolation:
\[ \nabla\omega(w_k) \coloneqq (1+\alpha_k)\nabla\omega(x_k) - \alpha_k\nabla\omega(x_{k-1}) \]
for some $\alpha_k \ge 0$.
Then, for any point $x \in \dom\omega$, the following inequality holds:
\begin{equation} \label{eq:dual_inertial_lemma}
    D_\omega(x, w_k) \le (1+\alpha_k)D_\omega(x, x_k) - \alpha_k D_\omega(x, x_{k-1}) + \delta_k,
\end{equation}
where
\begin{equation}
  \label{eq:delta}
  \delta_k \coloneqq \frac{\alpha_k(1+\alpha_k)}{2\mu} \|\nabla\omega(x_k) - \nabla\omega(x_{k-1})\|^2.
\end{equation}
\end{lemma}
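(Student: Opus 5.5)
The plan is to show that the difference
\[
\Delta_k(x) \coloneqq D_\omega(x,w_k) - (1+\alpha_k)D_\omega(x,x_k) + \alpha_k D_\omega(x,x_{k-1})
\]
does not depend on $x$, to rewrite it through the conjugate $\omega^*$, and to bound it using smoothness of $\omega^*$. Write $g_k \coloneqq \nabla\omega(x_k)$, $g_{k-1}\coloneqq\nabla\omega(x_{k-1})$ and $g_w\coloneqq\nabla\omega(w_k)=(1+\alpha_k)g_k-\alpha_k g_{k-1}$. The point $w_k$ lies in $\interior(\dom\omega)$ by the discussion preceding Proposition~\ref{prop:algorithm_well_defined}, so all three divergences are finite for $x\in\dom\omega$.

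\textbf{Step 1 (the $x$-dependence cancels).} Expand each divergence via Definition~\ref{def:Bregman.divergence}. The terms $\omega(x)$ appear with total coefficient $1-(1+\alpha_k)+\alpha_k=0$. The terms linear in $x$ give $-\langle g_w - (1+\alpha_k)g_k + \alpha_k g_{k-1}, x\rangle = 0$ by the definition of $w_k$. What remains is
\[
\Delta_k = \bigl[\langle g_w,w_k\rangle-\omega(w_k)\bigr] - (1+\alpha_k)\bigl[\langle g_k,x_k\rangle-\omega(x_k)\bigr] + \alpha_k\bigl[\langle g_{k-1},x_{k-1}\rangle-\omega(x_{k-1})\bigr].
\]
By the Fenchel--Young equality $\omega^*(\nabla\omega(u))=\langle\nabla\omega(u),u\rangle-\omega(u)$ for $u\in\interior(\dom\omega)$, this becomes
\[
\Delta_k=\omega^*(g_w)-(1+\alpha_k)\omega^*(g_k)+\alpha_k\omega^*(g_{k-1}).
\]

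\textbf{Step 2 (smoothness of $\omega^*$).} Since $\omega$ is $\mu$-strongly convex (Assumption~\ref{assump:omega}), $\omega^*$ is finite on all of $\R^n$ (as already noted in Section~\ref{sec:main}) and differentiable with a $(1/\mu)$-Lipschitz gradient. This is the standard strong convexity/smoothness duality for the Euclidean norm. Consequently, for all $a,b\in\R^n$, $\omega^*$ satisfies both the convexity lower bound and the descent-lemma upper bound with constant $L=1/\mu$.

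\textbf{Step 3 (the key inequality, and the main obstacle).} Set $f=\omega^*$, $a=g_k$, $b=g_{k-1}$, so that $g_w=a+\alpha_k(a-b)$. The subtle point is that convexity alone gives the wrong-direction bound on the cross term, so both uses must be upper (descent-lemma) bounds.
\begin{itemize}
\item The descent lemma at $a$ in direction $\alpha_k(a-b)$ gives $f(g_w)\le f(a)+\alpha_k\langle\nabla f(a),a-b\rangle+\frac{\alpha_k^2}{2\mu}\|a-b\|^2$.
\item The descent lemma from $a$ to $b$ gives $f(b)\le f(a)+\langle\nabla f(a),b-a\rangle+\frac{1}{2\mu}\|a-b\|^2$, that is, $\langle\nabla f(a),a-b\rangle\le f(a)-f(b)+\frac{1}{2\mu}\|a-b\|^2$.
\end{itemize}
Multiplying the second bound by $\alpha_k\ge0$ and substituting it into the first yields
\[
f(g_w)\le(1+\alpha_k)f(a)-\alpha_k f(b)+\frac{\alpha_k(1+\alpha_k)}{2\mu}\|a-b\|^2 .
\]
This says exactly $\Delta_k\le\delta_k$, which is \eqref{eq:dual_inertial_lemma} with $\delta_k$ as in \eqref{eq:delta}. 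The only genuinely delicate points are this choice of the two descent-lemma applications and the justification of Step 2's $1/\mu$-smoothness of $\omega^*$; the latter I would cite as standard (e.g., Rockafellar--Wets, Prop.~12.60).
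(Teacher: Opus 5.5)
Your proposal is correct and follows essentially the same route as the paper: the $x$-dependent terms cancel, the remainder is rewritten through $\omega^*$ via the Fenchel--Young equality, and the $(1/\mu)$-smoothness of $\omega^*$ is applied twice. The paper phrases your second descent-lemma step (from $a$ to $b$) as the bound $D_{\omega^*}(z_{k-1},z_k)\le\frac{1}{2\mu}\|z_k-z_{k-1}\|^2$, which is the same inequality.
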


\begin{proof}
Define $\Phi \coloneqq (1+\alpha_k)D_\omega(x, x_k) - \alpha_k D_\omega(x, x_{k-1}) - D_\omega(x, w_k)$.
We show that $\Phi \ge -\delta_k$.
By the definition of the Bregman divergence, we have
\begin{align*}
\Phi &= (1+\alpha_k) \big[ \omega(x) - \omega(x_k) - \langle \nabla\omega(x_k), x - x_k \rangle \big] \\
&\quad - \alpha_k \big[ \omega(x) - \omega(x_{k-1}) - \langle \nabla\omega(x_{k-1}), x - x_{k-1} \rangle \big] \\
&\quad - \big[ \omega(x) - \omega(w_k) - \langle \nabla\omega(w_k), x - w_k \rangle \big].
\end{align*}
Grouping the inner products involving $x$, we obtain $\langle -(1+\alpha_k)\nabla\omega(x_k) + \alpha_k\nabla\omega(x_{k-1}) + \nabla\omega(w_k), x \rangle$, which is identically zero by the definition of the dual extrapolation step;
thus, $\Phi$ is simplified as
\begin{align*}
\Phi &= -(1+\alpha_k)\big[ \omega(x_k) - \langle \nabla\omega(x_k), x_k \rangle \big] \\
&\quad + \alpha_k\big[ \omega(x_{k-1}) - \langle \nabla\omega(x_{k-1}), x_{k-1} \rangle \big] + \big[ \omega(w_k) - \langle \nabla\omega(w_k), w_k \rangle \big].
\end{align*}
By the property of the conjugate $\omega^*$, we get $\omega(v) - \langle \nabla\omega(v), v \rangle = -\omega^*(\nabla\omega(v))$ (see e.g.~\cite[Theorem~23.5]{rockafellar1970convex}).
Letting $z_k \coloneqq \nabla\omega(x_k)$ and $\hat{z}_k \coloneqq \nabla\omega(w_k)$, $\Phi$ is rewritten in the dual space as follows:
\begin{equation} \label{eq:phi_dual}
\Phi = (1+\alpha_k)\omega^*(z_k) - \alpha_k\omega^*(z_{k-1}) - \omega^*(\hat{z}_k).
\end{equation}
Since $\omega$ is $\mu$-strongly convex with respect to $\|\cdot\|$, its conjugate $\omega^*$ is $(1/\mu)$-smooth (see e.g.~\cite[Theorem~X.4.21]{HUL93II}).
Since $\nabla\omega^*(z_k)=x_k$ and $\hat{z}_k-z_k=\alpha_k(z_k-z_{k-1})$,
combining it with the smoothness inequality
$D_{\omega^*}(\hat{z}_k,z_k)=\omega^*(\hat{z}_k)-\omega^*(z_k)-\langle\nabla\omega^*(z_k),\hat{z}_{k}-z_k\rangle\le\frac1{2\mu}\|\hat{z}_k-z_k\|^2$ leads to
\begin{equation*}
\omega^*(\hat{z}_k) \le \omega^*(z_k) + \alpha_k \langle x_k, z_k - z_{k-1} \rangle + \frac{\alpha_k^2}{2\mu}\|z_k - z_{k-1}\|^2.
\end{equation*}
Then, substituting the upper bound of $\omega^*(\hat{z}_k)$ for~\eqref{eq:phi_dual}, $\Phi$ is bounded from below as follows:
\begin{align*}
	\Phi &\ge \alpha_k \big[ \omega^*(z_k) - \omega^*(z_{k-1}) - \langle x_k, z_k - z_{k-1} \rangle \big] - \frac{\alpha_k^2}{2\mu}\|z_k - z_{k-1}\|^2 \\
		&=-\alpha_k D_{\omega^*}(z_{k-1},z_k) - \frac{\alpha_k^2}{2\mu}\|z_k-z_{k-1}\|^2,
\end{align*}
where the last equality holds from $x_k=\nabla\omega^*(z_k)$.
Applying once again the smoothness property $D_{\omega^*}(z_{k-1}, z_k) \le \frac{1}{2\mu}\|z_k - z_{k-1}\|^2$, we obtain
\begin{equation*}
\Phi \ge -\frac{\alpha_k}{2\mu}\|z_k - z_{k-1}\|^2 - \frac{\alpha_k^2}{2\mu}\|z_k - z_{k-1}\|^2 = -\frac{\alpha_k(1+\alpha_k)}{2\mu}\|z_k - z_{k-1}\|^2.
\end{equation*}
Rearranging the terms yields \eqref{eq:dual_inertial_lemma}.
\end{proof}
\begin{remark}
The error term $\delta_k$ of~\eqref{eq:dual_inertial_lemma} depends on the gradient difference $\|\nabla\omega(x_k)-\nabla\omega(x_{k-1})\|$, which measures the inertial step magnitude in the dual space. In the Euclidean case $\omega(x)=\frac12\|x\|^2$, this error term reduces to $\frac{\alpha_k(1+\alpha_k)}{2}\|x_k-x_{k-1}\|^2$ as the dual and primal norms coincide, thereby recovering the classical inertial bound.
\end{remark}

The main results of this section will be established under the following assumptions.
\bassu \lab{assu:var}
The following conditions hold:
\begin{enumerate}[label=(\alph*), ref=(\alph*)]
\item \lab{assu:var.ter}
The sequence $ \seq{\eta_k} $ is nonincreasing, i.e., $ \eta_k \geq \eta_{k+1} $ for all $ k \geq 0 $.
\item \lab{assu:var.terb}
We have $ \alpha_0 \in [0, 1] $ and the sequence $ \seq{\alpha_k \eta_k^{-1}} $ nonincreasing, i.e., 
$ \alpha_k \eta_k^{-1} \geq \alpha_{k+1} \eta_{k+1}^{-1} $ for all $ k \geq 0 $. 
\item \lab{assu:var.seg}
The sequence of stepsizes $ \seq{\lambda_k} $ is chosen to satisfy: $ \lambda_k \in [\,\underline\lambda, \overline\lambda\,] $ for all $ k \geq 0 $,
where $ 0 < \underline\lambda \leq \overline\lambda \leq \mu/L_0 $
and $ L_0 \coloneqq L_F + \eta_0 L_H $.
\item \lab{assu:var.qua} 
The series $ \sum_{k=0}^\infty\,\delta_k \eta_k^{-1} $ is summable, i.e. 
$ s\coloneqq \sum_{k=0}^\infty\, \delta_k \eta_k^{-1} < +\infty $, where $ \delta_k \geq 0 $ is defined as in Lemma~\ref{lem:dual_bregman_inertia}.
\end{enumerate}
\eassu

\brema
\lab{rem:var} \label{rm:var}
By Assumptions~\ref{assu:var}\ref{assu:var.ter} and~\ref{assu:var.terb}, the sequence $(\alpha_k)$ is also nonincreasing, i.e., $\alpha_k\ge\alpha_{k+1}$ for all $k\ge 0$.
From Assumptions~\ref{assu:var}\ref{assu:var.ter} and~\ref{assu:var.seg}, we have $ 0< \lambda_k \leq \mu/L_k $, and hence 
$1- \frac{\lambda_k^2 L_k^2}{\mu^2} \geq 0$ for all $ k \geq 0 $, where $ L_k \coloneqq L_F + \eta_k L_H $.
The analysis is performed in a Euclidean setting, where the strong convexity parameter of $\omega(x) = \frac{1}{2}\|x\|^2$ is $\mu=1$.
In this case, condition~\ref{assu:var.seg} becomes $\lambda_k \le 1/L_k$.
This coincides with the condition imposed in \cite[Assumption 3.1(c)]{alves2025inertial}.
Regarding Assumptions~\ref{assu:var}\ref{assu:var.ter}, note that $x_k$ and $x_{k-1}$ are available before choosing $\alpha_k$, the assumption $\sum_{k=0}^{\infty}\delta_k \eta_k^{-1}<\infty$ can be enforced adaptively at each iteration.
For possible realizations for the sequences $ \seq{\alpha_k } $ and $ \seq{\eta_k } $, see~\cite[Remark 3.2(iii)]{alves2025inertial}. 
Typical choices include 
(i) if  $\eta_k$
is nonincreasing,
the adaptive choice of
$\alpha_k$
proposed in~\cite[Remark~3.2(iii)]{alves2025inertial} guarantees that
$\{\alpha_k/\eta_k\}$
is nonincreasing and
$\sum_k\delta_k/\eta_k<\infty$;
(ii) if
$\eta_k\equiv\eta>0$
is constant,
then it is sufficient to choose
$\{\alpha_k\}$
as any nonincreasing summable sequence.

\erema

To establish the iteration complexity results of Algorithm~\ref{alg:bregman}, we define the ergodic mean $\overline{y}_k$ based on the sequences $\{\lambda_j\}$ and $\{y_j\}$ generated by it:
\begin{align} \label{eq:def.ubs_alt1}
    \overline y_k \coloneqq \frac{1}{\Lambda_k}\sum_{j=0}^{k-1}\lambda_j y_j, \qquad \text{where} \quad
    \Lambda_k \coloneqq \sum_{j=0}^{k-1}\lambda_j.
\end{align}
From Assumption~\ref{assu:var}\ref{assu:var.seg} and the definition of $\Lambda_k$, it follows directly that
\begin{align} \label{eq:play_alt1}
	\Lambda_k \geq \underline\lambda k \qquad \text{for all } k \geq 1.
\end{align}

\begin{theorem}\label{prop result1}\sloppy
Suppose that the sequences generated by \emph{Algorithm~\ref{alg:bregman}} satisfy \emph{Assumption~\ref{assu:var}}. Let $(\overline{y}_k)$ be the sequence of the ergodic mean. Let $\Omega_X^2 \coloneqq \sup_{u,v \in X} D_\omega(u,v)$ be the Bregman diameter of the set $X$. Then, the following statements hold for all $k \ge 1$:
\begin{description}
    \item[\textbf{$\mathrm{(a)}$ Feasibility:}]
    \begin{equation}
    0 \le \operatorname{Gap}(\overline{y}_k, F, X) \le \frac{1}{k\underline{\lambda}}
    \left(
    \Omega_X^2 +
    \sum_{j=0}^{k-1} \delta_j +
    \overline{\lambda} C_H \mathcal{D}_X \sum_{j=0}^{k-1} \eta_j 
    \right)
    \end{equation}
    where $C_H$ is given in~\eqref{eq:def.ch};
    \item[\textbf{$\mathrm{(b)}$ Optimality:}] 
    \begin{equation}
    -B_H \operatorname{dist}(\overline{y}_k, Q) \le
    \operatorname{Gap}(\overline{y}_k, H, Q) \le
    \frac1{k\underline{\lambda}}
    \left(
        \frac{\Omega_X^2}{\eta_k} + \sum_{j=0}^{k-1} \frac{\delta_j}{\eta_j}
    \right)
    \end{equation}
    where $B_H > 0$ is defined as in~\eqref{eq:def.ch} and $\delta_j$ is the inertial error~\eqref{eq:delta}.
    Moreover, if $Q$ is $\sigma$-weakly sharp of order $\mathcal{M}\ge 1$, then for all $k\ge 1$,
    \begin{equation}
        -\frac{B_H}{\sigma^{1/\mathcal{M}}}\left\{
            \frac1{k\underline{\lambda}}\left(
                \Omega_X^2 + \sum_{j=0}^{k-1}\delta_j + \bar{\lambda}C_H\mathcal{D}_X\sum_{j=0}^{k-1} \eta_j
            \right)
        \right\}^{\frac1{\mathcal{M}}}
        \le \Gap(\bar{y}_k,H,Q).
    \end{equation}
\end{description}
\end{theorem}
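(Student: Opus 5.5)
The plan is to combine the one-step inequality of Proposition~\ref{prop 3point} with the Dual Bregman Inertial Lemma~\ref{lem:dual_bregman_inertia}, use monotonicity to move the operator onto the reference point, and then telescope with the weights coming from Assumption~\ref{assu:var}. Write $a_j \coloneqq D_\omega(x,x_j)$ for a fixed reference point $x\in X$. By Remark~\ref{rem:var}, $1-\lambda_k^2L_k^2/\mu^2\ge 0$, so the last term of \eqref{ieq:three.point} can be dropped. Bounding $D_\omega(x,w_k)$ by \eqref{eq:dual_inertial_lemma} then gives
\begin{equation*}
\lambda_k\langle G_k(y_k),y_k-x\rangle \le (a_k-a_{k+1}) + \alpha_k(a_k-a_{k-1}) + \delta_k .
\end{equation*}
By monotonicity of $F$ and $H$ we also have $\langle G_k(y_k),y_k-x\rangle\ge \langle F(x),y_k-x\rangle+\eta_k\langle H(x),y_k-x\rangle$.

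\emph{Part (a).} I keep the $F$-term and move the $H$-term to the right-hand side. It is bounded by $\lambda_k\eta_k|\langle H(x),y_k-x\rangle|\le \overline\lambda\,\eta_k C_H\mathcal D_X$, using $x,y_k\in X$ and Assumption~\ref{assu:Hbounded}. I then sum over $j=0,\dots,k-1$.

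The inertial telescoping is handled by Abel summation:
\begin{equation*}
\sum_{j=0}^{k-1}\alpha_j(a_j-a_{j-1}) = \alpha_{k-1}a_{k-1}+\sum_{j=0}^{k-2}(\alpha_j-\alpha_{j+1})a_j-\alpha_0a_{-1}.
\end{equation*}
Here $\alpha_j$ is nonincreasing (Remark~\ref{rem:var}), $a_j\le\Omega_X^2$, and $a_{-1}=a_0$ because $x_{-1}=x_0$. Adding $a_0-a_k\le a_0$, the whole sum is at most $(1-\alpha_0)a_0+\alpha_0\Omega_X^2\le\Omega_X^2$, using $\alpha_0\le 1$.

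Since $\overline y_k$ is a convex combination, linearity gives $\sum_j\lambda_j\langle F(x),y_j-x\rangle=\Lambda_k\langle F(x),\overline y_k-x\rangle$. I then take the supremum over $x\in X$ and use $\Lambda_k\ge k\underline\lambda$ from \eqref{eq:play_alt1}.

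For nonnegativity, note that $\overline y_k\in X$ by convexity. Taking $x^*\in Q$ then gives $\Gap(\overline y_k,F,X)\ge\langle F(x^*),\overline y_k-x^*\rangle\ge 0$.

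\emph{Part (b).} Now I take $x\in Q$, so that $\langle F(x),y_k-x\rangle\ge 0$ because $y_k\in X$. This leaves
\begin{equation*}
\lambda_k\langle H(x),y_k-x\rangle\le \frac{a_k-a_{k+1}}{\eta_k}+\frac{\alpha_k}{\eta_k}(a_k-a_{k-1})+\frac{\delta_k}{\eta_k}.
\end{equation*}
For the first group I write $a_j/\eta_j-a_{j+1}/\eta_{j+1}+a_{j+1}(1/\eta_{j+1}-1/\eta_j)$. The increments $1/\eta_{j+1}-1/\eta_j$ are nonnegative by Assumption~\ref{assu:var}\ref{assu:var.ter}. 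For the second group I apply Abel summation with the nonincreasing weights $\alpha_j/\eta_j$ from Assumption~\ref{assu:var}\ref{assu:var.terb}.

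Every $a_j$ then appears with a nonnegative coefficient and is bounded by $\Omega_X^2$. The boundary terms $a_0/\eta_0-\alpha_0a_{0}/\eta_0$ are controlled using $a_{-1}=a_0$ and $\alpha_0\le1$. The total should collapse to at most $\Omega_X^2/\eta_{k}$, using $1/\eta_{k-1}\le1/\eta_k$. Averaging with $\Lambda_k$ and taking the supremum over $x\in Q$ gives the upper bound.

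The lower bound $-B_H\dist(\overline y_k,Q)$ is exactly \eqref{eq:bh} of Proposition~\ref{pro:optm02}. Under weak sharpness, I insert \eqref{eq:bh02} with $y=\overline y_k\in X$ and then the bound of part~(a).

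\emph{Main obstacle.} I expect the hardest step to be the weighted telescoping in part (b). There one must check that the mixed coefficients of each $a_j$ are nonnegative, specifically $(1/\eta_j-1/\eta_{j-1})+(\alpha_{j-1}/\eta_{j-1}-\alpha_j/\eta_j)$ and the boundary terms, and that they sum to at most $1/\eta_k$. If a direct bound is messy, I would first group them as $\frac{1+\alpha_j}{\eta_j}-\frac{1}{\eta_{j-1}}-\frac{\alpha_{j+1}}{\eta_{j+1}}$ plus boundary terms. I would then verify that the total telescopes to $\frac{1}{\eta_{k-1}}+\frac{\alpha_{k-1}}{\eta_{k-1}}-\frac{\alpha_0}{\eta_0}$ in the worst case, adjusting the constant if needed.
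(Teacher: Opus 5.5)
Your proposal is correct and follows essentially the same route as the paper. Both arguments use Proposition~\ref{prop 3point} with the last term dropped, Lemma~\ref{lem:dual_bregman_inertia}, monotonicity, a weighted telescoping, averaging with $\Lambda_k\ge k\underline\lambda$, and Proposition~\ref{pro:optm02} for the lower bounds; the difference is that you carry out the telescoping explicitly via Abel summation, whereas the paper defers it to Alves et al. Your hedge in (b) is unnecessary: $a_k$ enters with net coefficient $-1/\eta_{k-1}$ and is dropped, and the remaining nonnegative coefficients (with $a_{-1}=a_0$ merged) sum exactly to $1/\eta_{k-1}\le 1/\eta_k$.
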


\begin{proof}
\textbf{$\mathrm{(a)}$ Feasibility:} Let $x\in X$ be arbitrary, $y_j\in X$ and $0\le j \le k-1$.
From Proposition~\ref{prop 3point}, we obtain
\[
    \lambda_j\langle G_j(y_j),y_j-x\rangle=
    \lambda_j \langle F(y_j) + \eta_j H(y_j), y_j - x \rangle \le D_\omega(x, w_j) - D_\omega(x, x_{j+1}).
\]
Here, the last term of~\eqref{ieq:three.point} is omitted due to Remark~\ref{rem:var}.
By the monotonicity of $F$, we have $\langle F(y_j), y_j - x \rangle \ge \langle F(x), y_j - x \rangle$. Rearranging it to isolate the feasibility term, we obtain
\[
    \lambda_j \langle F(x), y_j - x \rangle \le D_\omega(x, w_j) - D_\omega(x, x_{j+1}) - \lambda_j \eta_j \langle H(y_j), y_j - x \rangle.
\]

Now, from the Cauchy-Schwarz inequality, we can bound the term involving~$H$,
i.e., $\langle H(y_j), y_j - x \rangle \le \|H(y_j)\| \|y_j-x\| \le C_H \mathcal{D}_X$, which yields
\[
    \lambda_j \langle F(x), y_j - x \rangle \le D_\omega(x, w_j) - D_\omega(x, x_{j+1}) + \lambda_j \eta_j C_H \mathcal{D}_X.
\]
Let $\varphi_j=D_\omega(x,x_j)$.
Then, it follows from Lemma~\ref{lem:dual_bregman_inertia} that
\[
\begin{aligned}
    \lambda_j \langle F(x), y_j - x \rangle & \le \varphi_j - \varphi_{j+1} + \alpha_j (\varphi_j - \varphi_{j-1}) + \delta_j + \lambda_j \eta_j C_H \mathcal{D}_X \\
\end{aligned}
\]
where $\delta_j$ is the error term given by~\eqref{eq:delta}.
The rest of the proof is based on~\cite[Propositions~3.3~and~3.5]{alves2025inertial};
we omit the detailed algebraic manipulation.
Summing up the above inequalities for $j=1,\dots,k-1$, we then obtain
\[
\begin{aligned}
    \sum_{j=0}^{k-1}\lambda_j\langle F(x),y_j-x\rangle & \le 
    \varphi_{k-1} - \varphi_k + \delta_j + \sum_{j=0}^{k-1} \lambda_j\eta_j C_H \mathcal{D}_X\\
    & \le\Omega_X^2 + \delta_j + \sum_{j=0}^{k-1} \lambda_j\eta_j C_H \mathcal{D}_X.
\end{aligned}
\]
Dividing the above expression by $\Lambda_k$, using~\eqref{eq:play_alt1}, and taking the supremum over $x \in X$ gives the desired bound for $\operatorname{Gap}(\overline{y}_k, F, X)$.

\textbf{$\mathrm{(b)}$ Optimality:} Let $x\in Q$.
We start again from Proposition~\ref{prop 3point}.
\begin{align}\label{ieq:monotone.opt}
    \lambda_j \langle F(y_j) + \eta_j H(y_j), y_j - x \rangle \le D_\omega(x,w_j) - D_\omega(x,x_{j+1}).
\end{align}
By the monotonicity of $F$ and $H$, and since $x\in Q$ implies $\langle F(x),y_j-x\rangle \ge 0$, we have
\begin{align*}
    \langle F(y_j), y_j - x \rangle &\ge \langle F(x), y_j - x \rangle \ge 0, \\
    \langle H(y_j), y_j - x \rangle &\ge \langle H(x), y_j - x \rangle.
\end{align*}
Substituting these into~\eqref{ieq:monotone.opt} yields
\[
    \lambda_j \eta_j \langle H(x), y_j - x \rangle \le D_\omega(x, w_j) - D_\omega(x, x_{j+1}).
\]
Let $\varphi_j=D_\omega(x,x_j)$.
Then, it follows from Lemma~\ref{lem:dual_bregman_inertia} that
\[
    \lambda_j \eta_j \langle H(x), y_j - x \rangle \le \varphi_j - \varphi_{j+1} + \alpha_j(\varphi_j - \varphi_{j-1}) + \delta_j.
\]
Dividing both sides of the inequality by $\eta_j$ and performing some algebraic manipulations, we obtain the following telescoping sum:
\begin{align*}
    \lambda_j \langle H(x), y_j - x \rangle 
    &\le \frac{\varphi_j}{\eta_j} - \frac{\varphi_{j+1}}{\eta_j} + \frac{\alpha_j}{\eta_j}(\varphi_j - \varphi_{j-1}) + \frac{\delta_j}{\eta_j} \\
    &= \left(\frac{\varphi_j}{\eta_j} - \frac{\varphi_{j+1}}{\eta_{j+1}}\right) + \left(\frac{1}{\eta_{j+1}} - \frac{1}{\eta_j}\right)\varphi_{j+1} \\
    &\quad + \left(\frac{\alpha_j \varphi_j}{\eta_j} - \frac{\alpha_{j-1}\varphi_{j-1}}{\eta_{j-1}}\right) + \left(\frac{\alpha_{j-1}}{\eta_{j-1}} - \frac{\alpha_j}{\eta_j}\right)\varphi_{j-1} + \frac{\delta_j}{\eta_j}.
\end{align*}
The rest of the proof also holds similarly to~\cite[Proposition~3.3]{alves2025inertial}.
Summing up this inequality from $j=0$ to $k-1$, 
and using the definition of $\Omega_X^2$ as well as the fact that $\varphi_j \ge 0$, we have
\[ 
  \sum_{j=0}^{k-1} \lambda_j \langle H(x), y_j - x \rangle 
  \le \frac{\Omega_X^2}{\eta_k} + \sum_{j=0}^{k-1} \frac{\delta_j}{\eta_j}.
\]

The latter part under weak sharpness directly follows from (a) and Proposition~\ref{pro:optm02}.
\end{proof}

\subsection{Iteration complexity of Algorithm \ref{alg:bregman} with diminishing and constant regularization parameters} \lab{subsec:dimish}

In the rest of this section, we demonstrate the optimality and feasibility iteration complexity of Algorithm~\ref{alg:bregman} under which the regularization parameter $\eta_k$ is either diminishing or constant.

First, we consider the case in which $\eta_k$ is decreasing.

\bassu \lab{assu:dimish}
The sequence of regularization parameters $ \seq{\eta_k} $ is given by
\begin{align}\lab{eq:def.etak}
 \eta_k \coloneqq \dfrac{\eta_0}{(k+1)^b} \qquad \mbox{for all} \quad k \geq 0,
\end{align}
where $ 0 < \eta_0 \leq 1 $ and $ 0 < b< 1$. 
\eassu 

Now we give a specific iteration complexity of Algorithm~\ref{alg:bregman} with diminishing
regularization.
\bcoro \lab{the:main}
Suppose \emph{Assumptions} \emph{\ref{assu:var}\ref{assu:var.terb}}--\emph{\ref{assu:var}\ref{assu:var.qua}} and \emph{\ref{assu:dimish}} hold.
Let $ \seq{\overline y_k} $ be as in 
\eqref{eq:def.ubs_alt1}
and let the constants $ C_H $ and $ B_H $ be as in \eqref{eq:def.ch}. 
Then the following statements hold:
\begin{description}
    \item [\textbf{$\mathrm{(a)}$ Feasibility:}] \lab{theo:main.ter}
    For all $ k \geq 1 $,
    \begin{align} \lab{eq:sadnight02} 
        0
    	\leq \Gap(\overline y_k, F, X) 
     	\leq \dfrac{1}{k}\left(\dfrac{\Omega_X^2}{\underline\lambda}\right)
     	+ \dfrac{1}{k}\left(\dfrac{s}{\underline \lambda}\right)
     		+ \dfrac{1}{k^b}\left(\dfrac{ \eta_0 \overline \lambda C_H D_X}{(1 - b)\underline \lambda}\right).
    \end{align}
    \item [\textbf{$\mathrm{(b)}$ Optimality:}] \lab{theo:main.seg}
    For all $ k \geq 1 $,
    \begin{align} \lab{eq:sadnight}
        -B_H \dist(\overline y_k, Q) 
        \leq \Gap(\overline y_k, H, Q)
        \leq 
        \dfrac{1}{k^{1-b}}\left(\dfrac{\Omega_X^2 }{2^{-b}\underline \lambda\, \eta_0}\right) + 
        \dfrac{1}{k}\left(\dfrac{s}{\underline \lambda}\right).
    \end{align}
    Moreover, if $Q$ is $\sigma$-weakly sharp of order $\mathcal{M}\ge 1$, then for all $k\ge 1$,
    \begin{align} \lab{eq:sadnight03}
     \Gap(\overline y_k, H, Q) 
     	\geq -\dfrac{B_H}{\sigma^{\,1/\mathcal{M}}} 
     \Bigg( \dfrac{1}{k}\left(\dfrac{\Omega_X^2}{\underline\lambda}\right)
     	+ \dfrac{1}{k}\left(\dfrac{s}{\underline \lambda}\right) 
     		+ \dfrac{1}{k^b}\left(\dfrac{ \eta_0 \overline \lambda C_H D_X}{(1 - b)\underline \lambda}\right)
     \Bigg)^
     {\frac{1}{\mathcal{M}}}.
    \end{align}
\end{description}

\ecoro

\bproo
Since $ \seq{\eta_k} $ as in \eqref{eq:def.etak} clearly satisfies Assumption \ref{assu:var}\ref{assu:var.ter}, then all contents of Assumption \ref{assu:var} hold.
Consequently, the conditions of Theorem~\ref{prop result1} are satisfied.

\textbf{Proof of (a):}
By \eqref{eq:def.etak}, we observe that
\begin{align} \label{eq:integral}
\sum_{j=0}^{k-1}\eta_j = \eta_0\sum_{j=1}^k \frac{1}{j^b} \leq \eta_0 \left(1 + \int_{1}^k \frac{1}{\tau^b} d\tau \right) \leq \frac{\eta_0}{1-b} k^{1-b}.
\end{align}
Then, \eqref{eq:sadnight02} is obtained by combining Theorem~\ref{prop result1} with \eqref{eq:integral} and the fact that $\sum_{j=0}^\infty \delta_j \leq s$ follows from \eqref{eq:def.etak}.

\textbf{Proof of (b):}
The inequality \eqref{eq:sadnight} follows directly from Theorem~\ref{prop result1}, the definition of $s$ in Assumption~\ref{assu:var}\ref{assu:var.qua}, and the bound
\[
k\eta_k = \eta_0 k(k+1)^{-b} \geq \eta_0 2^{-b} k^{1-b}.
\]
The latter part is obvious from Proposition~\ref{pro:optm02} and~\eqref{eq:sadnight02}. 
\eproo

\mgap

We now consider the case in which $\eta_k$ is constant for all $k$.

\bassu \lab{assu:etacons}
The sequence of regularization parameters $ \seq{\eta_k} $ is constant:
\begin{align*} 
 \eta_k \equiv \eta > 0 \qquad \mbox{for all} \quad k \geq 0.
\end{align*}
\eassu

\brema \lab{rem:block}
We notice that Assumption~\ref{assu:var}\ref{assu:var.qua} is reduced to $\sum_{k=0}^\infty \delta_k $ being summable, i.e.,
\begin{align} \lab{eq:def.heta}
 \what s \coloneqq \sum_{k=0}^\infty \delta_k < + \infty.
\end{align}
\erema

\bcoro \lab{the:ofcons}
Suppose that \emph{Assumptions}~\emph{\ref{assu:var}\ref{assu:var.terb}}--\emph{\ref{assu:var}\ref{assu:var.qua}}, \emph{\ref{assu:etacons}} hold\,---\,see Remark \ref{rem:block}.
Let $ \seq{\overline y_k} $, $ \what s $, $ B_H $, and $ C_H $ be as defined in~\eqref{eq:def.ubs_alt1}, \eqref{eq:def.heta}, and~\eqref{eq:def.ch}, respectively.
Then, the following statements hold: 
\begin{description}
    \item[\textbf{$\mathrm{(a)}$ Feasibility:}] 
    For all $ k \geq 1 $,
    \begin{align} \lab{eq:sadday02}
        0 
    	\leq \Gap(\overline y_k, F, X)
     	\leq \dfrac{1}{k}\left( \dfrac{\Omega_X^2 + \what s}{\underline\lambda} \right)
    		+ \eta \left( \dfrac{\overline\lambda C_H D_X}{\underline\lambda}\right).
    \end{align}
    \item[\textbf{$\mathrm{(b)}$ Optimality:}] 
    For all $ k \geq 1 $,
    \begin{align} \lab{eq:sadday}
    -B_H \dist(\overline y_k, Q) 
    	\leq \Gap(\overline y_k, H, Q)
     	\leq 
     	\dfrac{1}{k}\left( \dfrac{\Omega_X^2 + \what s}{\underline\lambda \eta} \right).
    \end{align}
    Moreover, if $Q$ is $\sigma$-weakly sharp of order $\mathcal{M}\ge 1$, then for all $k\ge 1$,
    \begin{align} \lab{eq:sadday03}
     \Gap(\overline y_k, H, Q) 
     	\geq -\dfrac{B_H}{\sigma^{\,1/\mathcal{M}}} 
     		\Bigg( \dfrac{1}{k}\left( \dfrac{\Omega_X^2 + \what s}{\underline\lambda} \right)
    		+ \eta \left( \dfrac{\overline\lambda C_H D_X}{\underline\lambda}\right) \Bigg)^{\frac{1}{\mathcal{M}}}.
    \end{align}
\end{description}
\ecoro
\begin{proof}
    We can show them in the same manner as the proof of Corollary~\ref{the:main}.
\end{proof}

By setting the tolerance $\epsilon>0$ in the general bounds in Corollary~\ref{the:ofcons}, we obtain the following iteration complexity result.

\bcoro \lab{the:ofcons02}
Suppose that all assumptions of \emph{Corollary \ref{the:ofcons}} hold and that
\begin{align} \lab{eq:etaconsb}
 \eta \coloneqq \dfrac{\veps}{2\mathcal{D}_0},
\end{align}
where $ \veps >0 $ is a given tolerance and $ \mathcal{D}_0 > 0 $ is such that
\begin{align} \lab{eq:ubdz}
\mathcal{D}_0 \geq \dfrac{\overline \lambda C_H D_X}{\underline \lambda}.
\end{align}
Then, for all $k$ such that
\begin{align} \lab{eq:lagrange}
k \geq 
\max\left\{ 
\left\lceil \dfrac{2}{\veps^2}\left( \dfrac{\mathcal{D}_0(\Omega_X^2 + \what s)}{\underline\lambda}\right) \right \rceil,
\left\lceil \dfrac{2}{\veps}\left( \dfrac{\Omega_X^2 + \what s}{\underline\lambda}\right)\right\rceil
\right\},
\end{align}
we have
\begin{align} \lab{eq:bertsekas}
-B_H \dist(\overline y_k, Q) \leq  
\Gap(\overline y_k, H, Q) \leq \veps \quad \mbox{and} \quad 
0\leq \Gap(\overline y_k, F, X) \leq \veps.
\end{align}
Moreover, if $ Q $ is $ \sigma $-weakly sharp of order $ \mathcal{M} \geq 1 $, then 
\begin{align} \lab{eq:bertsekas02}
\Gap(\overline y_k, H, Q) \geq -\left(\dfrac{B_H}{\sigma^{1/\mathcal{M}}}\right)
 \veps^{\frac{1}{\mathcal{M}}}.
\end{align}
\ecoro
\bproo
The proof follows directly from Corollary~\ref{the:ofcons}, \eqref{eq:etaconsb} and \eqref{eq:ubdz}.
\eproo

\section{Numerical experiments}\label{sec:ne}

We conduct numerical experiments with Python 3.12 to check the validity of our proposed method. First, we instantiate our framework under two distinct Bregman divergences, providing exact closed-form updates for both the proximal mappings and the dual inertial extrapolations.
\begin{enumerate}[label={}]
    \item \textbf{Euclidean distance} (Euc): $\omega(x) = \frac{1}{2}\|x\|^2$.
    \begin{itemize}
        \item \textit{Dual extrapolation:} The gradient and that
        of the conjugate function are identity mappings, i.e., $\nabla\omega(x) = x$ and $\nabla\omega^*(z) = z$, respectively.
        Consequently, the dual extrapolation degenerates into the standard linear extrapolation in the primal space: $w_k = x_k + \alpha_k(x_k - x_{k-1})$.
        \item \textit{Proximal step:} The prox-mapping is reduced to the standard Euclidean orthogonal projection $P_X$ onto $X$.
    \end{itemize}
    \item \textbf{Kullback-Leibler divergence} (KL): $\omega(x) = \sum x_i \ln x_i$.
    \begin{itemize}
        \item \textit{Dual extrapolation:} The gradient and that of conjugate are given by $[\nabla\omega(x)]_i = 1 + \ln x_i$ and  $[\nabla\omega^*(z)]_i = \exp(z_i - 1)$, respectively. Applying the dual extrapolation step $w_k = \nabla\omega^* \big( (1+\alpha_k)\nabla\omega(x_k) - \alpha_k \nabla\omega(x_{k-1}) \big)$ leads to the \textit{multiplicative geometric extrapolation} as follows:
        \begin{equation}\label{eq:kl_extrapolation}
            [w_k]_i = x_{k, i} \left( \frac{x_{k, i}}{x_{k-1, i}} \right)^{\alpha_k}.
        \end{equation}
        This update ensures that $w_k>0$, thereby strictly preserving interior feasibility and avoiding the boundary violations typically induced by Euclidean linear extrapolation.
        \item \textit{Proximal step:} As detailed in \cite{Beck2017}, the proximal step is calculated via an analytic exponential update. This enforces non-negativity and avoids computationally expensive projections; see~\eqref{prox.explicit} in Section~\ref{ssec:NE.selection} for more details.
    \end{itemize}
\end{enumerate}

In all numerical experiments, the inertial parameter $\alpha_k$ is dynamically updated as $\alpha_k = 0.5 / \sqrt{k+1}$. 
Also, to validate the regularized extragradient scheme, we compare the two strategies for the regularization parameter $\eta_k$:
\begin{itemize}
    \item \textbf{Dynamic $\eta$} (Dyn $\eta$): The parameter monotonically decreases as $\eta_k = 0.1 / (k+1)^{0.5}$.
    \item \textbf{Fixed $\eta$} (Fix $\eta$): The parameter is constant at $\eta_k \equiv 0.1$.
\end{itemize}

\subsection{Equilibrium selection for Nash equilibrium problems}\label{ssec:NE.selection}

In this experiment, we consider the equilibrium selection problem arising from the multi-portfolio problem.
Recently,~\cite{Lampariello2021} proposed a (multi-)~portfolio model in which
several investors make decisions about the portfolio while considering the actions of
other investors, or the portfolio, noncooperatively.

Let $N$ be the number of accounts, which are indexed by $\nu\in\{1,\dots,N\}$. Then, the notations are defined below. For a more detailed description regarding this model, see~\cite{Lampariello2021}:
\begin{itemize}
    \item $K\in\N$: the number of assets to invest;
    \item $b^\nu\in\R_+$: each account $\nu$'s budget to be invested in $K$ assets of a market;
    \item $c^\nu\colon\R^{NK} \to\R^K$: the market impact unitary cost function. We define
    \[
        c^\nu(x^\nu,x^{-\nu}) \coloneqq \Omega^\nu\sum_{\nu'=1}^N b^{\nu'}(x^{\nu'}-v^{\nu'}),
    \]
    where $\Omega\in\R^{K\times K}$ is a market impact matrix whose $(i,j)$ entry 
    is the impact of the liquidity of asset $i$ on the liquidity of asset $j$ and 
    positive semidefinite. The vector $v^\nu\in\R^K$ denotes the current position of account $\nu$;
    \item $r\in\R^K$: the random variable where $r_k$ is the return of asset $k\in\{1,\dots,K\}$ over a single-period investment;
    \item $\mu^\nu\in\R^K$: the expectations of the assets' returns for $\nu$;
    \item $\Sigma^\nu \coloneqq \mathbb E^\nu[(r-\mu^\nu)(r-\mu^\nu)^\top]\in\mathbb S^K_+$: 
    the covariance matrix, where $\mathbb S^K_+$ is a space of symmetric positive semidefinite matrices of $K\times K$;
    \item $x^\nu\in\R^K$: (variable) the fractions of $b^\nu$ to invest in each asset;
    \item $x^{-\nu}\in\R^{(N-1)K}$: tuple of all accounts' variables, except $x^\nu$.
\end{itemize}

Let $I^\nu(x^\nu) \coloneqq b^\nu (\mu^\nu)^\top x^\nu$,
$R^\nu(x^\nu) \coloneqq \frac12(b^\nu)^2 (x^\nu)^\top\Sigma^\nu x^\nu$,
and $TC^\nu(x^\nu,x^{-\nu}) \coloneqq b^\nu(x^\nu-v^\nu)^\top c^\nu(x^\nu,x^{-\nu})$
respectively be a portfolio income, risk, and total transaction cost.
The cost function for account $\nu$ is defined by
\[
    \theta^\nu(x^\nu,x^{-\nu}) \coloneqq -I^\nu(x^\nu)+\rho^\nu R^\nu(x^\nu)+ TC^\nu(x^\nu,x^{-\nu}),
\]
and its strategy space $X^\nu$ is given by a polyhedron as follows:
\begin{align}\label{eq:polyhedron}
    X^\nu \coloneqq \{x^\nu\in\R^K\mid e^\top x^\nu = 1,\ x^\nu \ge 0\},
\end{align}
where $e$ denotes the all-ones vector of appropriate dimension.
The account $\nu\in\{1,\dots,N\}$ solves the following problem:
\begin{align}\label{prob:gnep.nu}
    \min_{x^\nu\in\R^{K}} \quad \theta^\nu(x^\nu,x^{-\nu})
    \qquad \text{s.t.} \quad x^\nu\in X^\nu.
\end{align}

Let $\mathrm{NE}$ be the set of Nash equilibria; that is,
\[
    \mathrm{NE} \coloneqq \left\{
        x\in X\ \middle|\ \theta^\nu(x^{\nu},x^{-\nu})\le \min_{y^\nu\in X^\nu}\theta^\nu(y^\nu,x^{-\nu})\quad \forall \nu\in\{1,\dots,N\}
    \right\},
\]
where 
\[
    X \coloneqq X^1\times \dots \times X^N=\{x\in\R^{NK}\mid e^\top x^\nu = 1,\ x^\nu\ge 0,  \quad \nu = 1,\dots,N\}.
\]

Since each player's objective function is convex, and the strategy set is convex and compact,
the Nash equilibrium of the game exists, i.e., $\mathrm{NE} \neq \emptyset$. 
In this setting, it is known that the game is reformulated as finding a solution to variational inequality~\eqref{eq:def.Q} as follows (see~\cite{Facchinei2004}):
\[
\mathrm{NE}=\mathrm{SOL}(F,X) \coloneqq \{ x\in X \mid \langle F(x), y-x\rangle \ge 0\quad \forall y \in X \},
\]
where $F\colon\R^{NK}\to\R^{NK}$ is defined as 
\[
    F(x) \coloneqq
    \begin{bmatrix}
        \nabla_{x^1} \theta^1(x^1,x^{-1}) \\
        \vdots \\
        \nabla_{x^N} \theta^N(x^N,x^{-N})
    \end{bmatrix}.
\]

Now we consider the following equilibrium selection problem according to some further criterion $f\colon\R^{NK}\to\R$, which is formulated as an optimization problem with variational inequality constraints (OPVIC); see~\cite{Lampariello2021} for details:
\begin{align*}
    \min \quad f(x) \qquad \text{s.t.} \quad x\in\mathrm{SOL}(F,X).
\end{align*}

In this experiment, $f\colon\R^{NK}\to\R$ is supposed to be the following metric of the sparsity function:
\[
    f(x) \coloneqq z^\top \sum_{\nu=1}^N x^\nu.
\]
Here, $z\in\R^K$ is a randomly generated vector of positive weights, and 
we add to the function $f(x)$ a negligible regularization term $\tau x^\top x$ 
so that the resulting sum is strictly convex with small $\tau>0$.
Since $x^\nu\ge 0$, $f(x)$ corresponds to the weighted $\ell_1$-norm of $\sum_{\nu=1}^N x^\nu$,
where $z_i$ denotes the weight associated with the asset $i$.

It should be noted that the proximal operator $\prox^\omega_{\lambda\xi}(x)$
for $\xi:=(\xi^1,\dots,\xi^N)\in\R^{NK}$, $\lambda>0$, and $\omega(x)=\sum_{\nu=1}^N \sum_{i=1}^{K} x^\nu_i \ln x^\nu_i$
is obtained explicitly if we assume $\prox^\omega_{\lambda\xi}(x)>0$~(\cite{Beck2017}):
\begin{align}\label{prox.explicit}
    [\prox^\omega_{\lambda\xi}(x)]^\nu_i=
    \frac{x^\nu_i \exp(-\lambda\xi^\nu_i)}{\sum_{j=1}^K x^\nu_j\exp(-\lambda\xi^\nu_j)},\quad
    i=1,\dots,K,\ \nu=1,\dots,N.
\end{align}
Therefore, we do not need to compute a projection. It is
explicitly obtained, which may contribute to reducing the computation time.

\subsubsection{Numerical data and configurations}\label{sssec:data.generation}
The problem parameters for the $N$-player portfolio game were generated as follows. We consider a scenario with $N=5$ accounts and $K=10$ assets, resulting in a joint strategy space of dimension $n = 50$. 
The parameters are randomly instantiated using a fixed seed:

\begin{itemize}
    \item The budget of each account: $b^\nu \sim U(1.0, 5.0)$, where $U(a,b)$ represents a uniform distribution in the interval $[a,b]$;
    \item The expected return: $\mu^\nu_i \sim U(0.05, 0.20)$;
    \item The covariance matrix: $\Sigma^\nu = \frac{1}{K} A^\top A $, where the entries of $A \in \mathbb{R}^{K \times K}$ are drawn from a standard normal distribution $\mathcal{N}(0,1)$, and its first two columns are set equal;
    \item The market impact: $\Omega = \frac{1}{K} B^\top B$, where $B \in \mathbb{R}^{K \times K}$ has entries from $\mathcal{N}(0,1)$, and its first two columns are set equal;
    \item The reference volume $v^\nu$ is drawn from a flat Dirichlet distribution;
    \item The risk aversion factor: $\rho^\nu \sim U(0.5, 2.0)$;
    \item The weight for the upper-level sparsity metric $f(x)$: $z_i \sim U(0.1, 1.0)$ and the strict convexification parameter $\tau = 10^{-4}$.
\end{itemize}

The construction of $A$ and $B$ makes the first two assets indistinguishable at the lower level.
This yields multiple lower-level equilibria in the NEP.
The common extragradient step-size is fixed at $\lambda_k \equiv 0.005$. 

\subsubsection{Evaluation metrics:}
\begin{enumerate}
    \item \textbf{Suboptimality (stationarity):} $D_k = \|\bar{y}_k - \bar{y}_{k-1}\|$.
    \item \textbf{Lower-level feasibility (NCP residual)}: 
    We track convergence to the lower-level solution set $Q$ via the natural residual:
    \begin{equation} \label{eq:infeasib}
        \phi(\bar{y}_{k}) \coloneqq \| \bar{y}_{k} - P_X(\bar{y}_{k} - F(\bar{y}_{k})) \|^2.
    \end{equation}
    In fact, note that
    $\phi(\bar{y}_k)=0$ is equivalent to $\bar{y}_k \in Q$.
\end{enumerate}

\subsubsection{Results and discussions}

\begin{figure}[ht]
    \centering
    \includegraphics[width=0.8\textwidth]{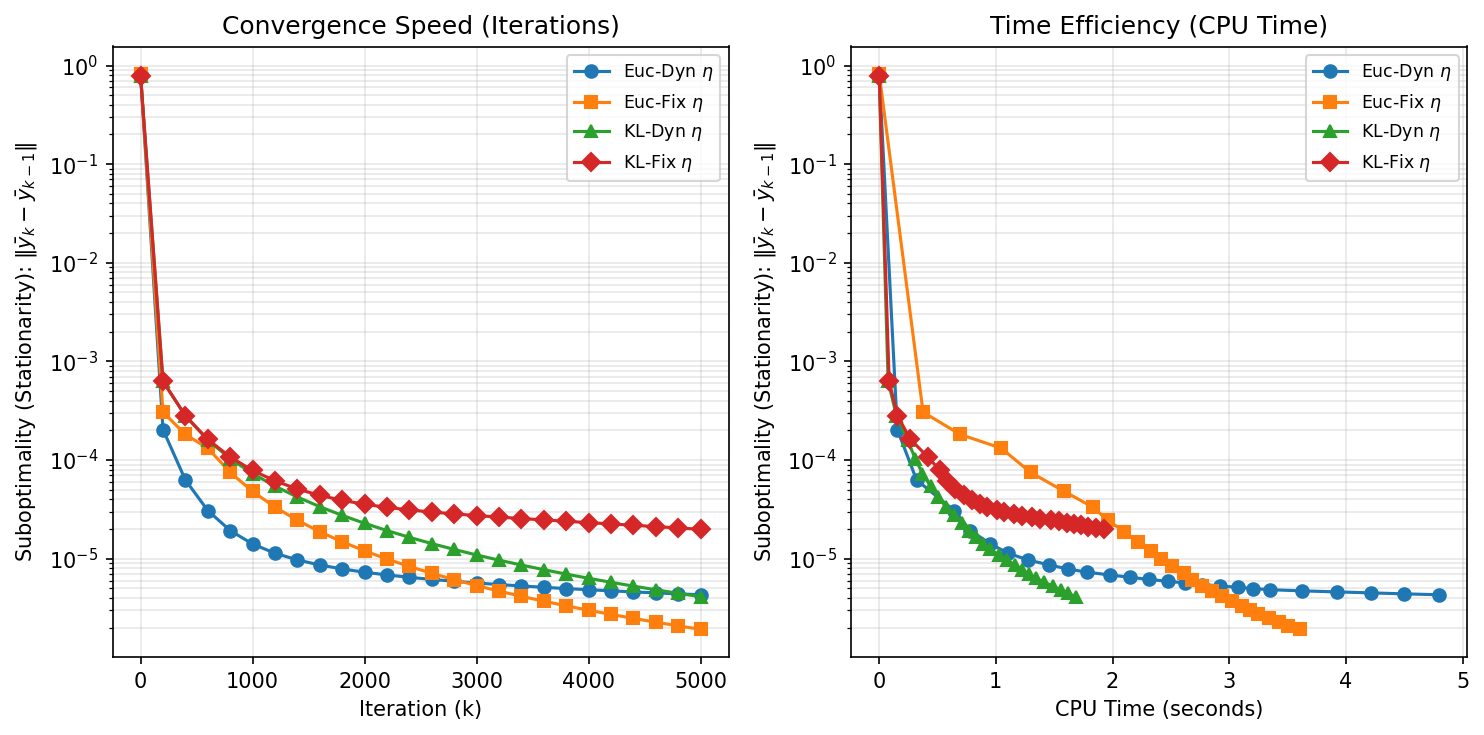}
    \caption{Performance comparison of the \textbf{IneIREG-Bregman} algorithm for the $N$-player portfolio game in terms of suboptimality (stationarity of the ergodic mean $\|\bar{y}_k - \bar{y}_{k-1}\|$). }
    \label{fig:portfolio_subopt}
\end{figure}

\begin{figure}[ht]
    \centering
    \includegraphics[width=0.8\textwidth]{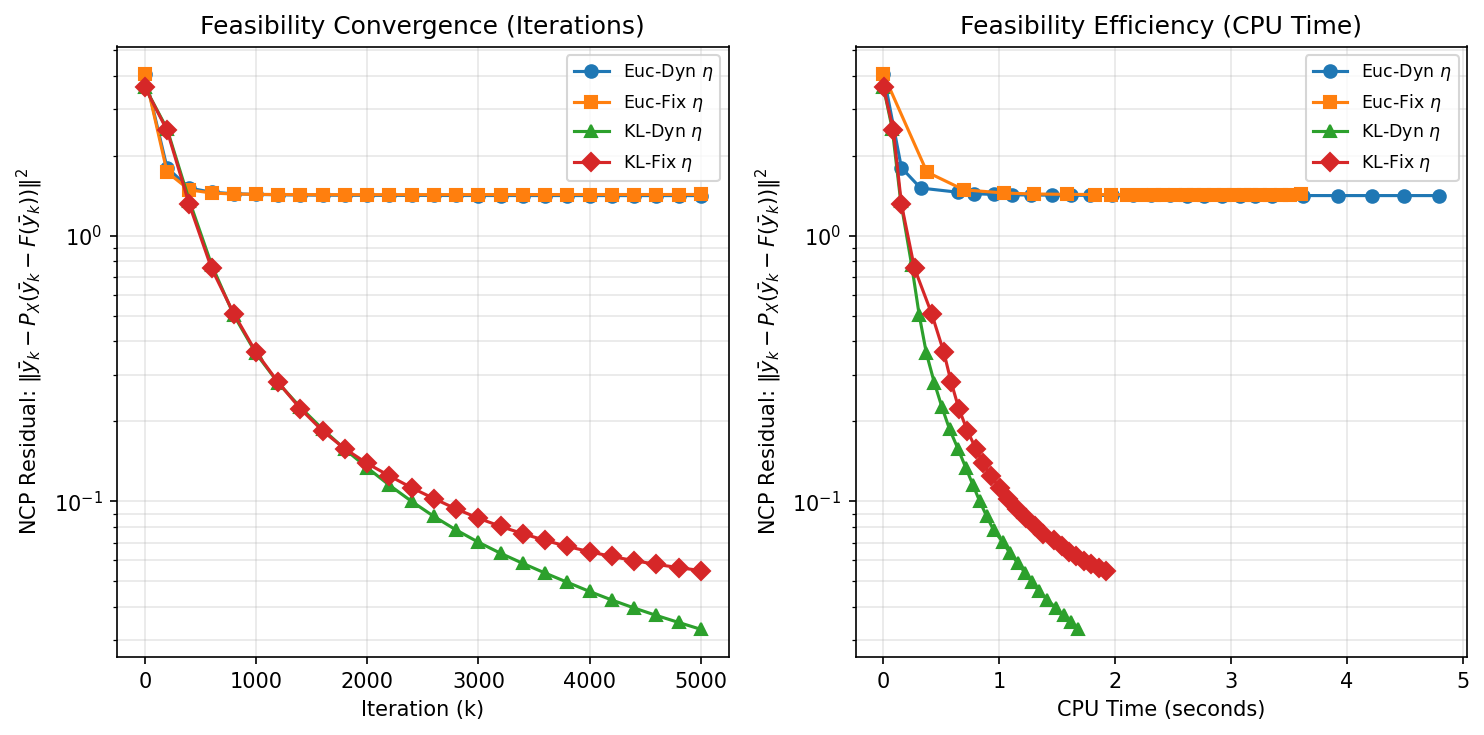}
    \caption{Convergence of $\phi(\bar{y}_k) = \|\bar{y}_k - P_X(\bar{y}_k - F(\bar{y}_k))\|^2$.}
    \label{fig:portfolio_infeasibility}
\end{figure}
\begin{figure}[ht]
    \centering
    \includegraphics[width=0.8\textwidth]{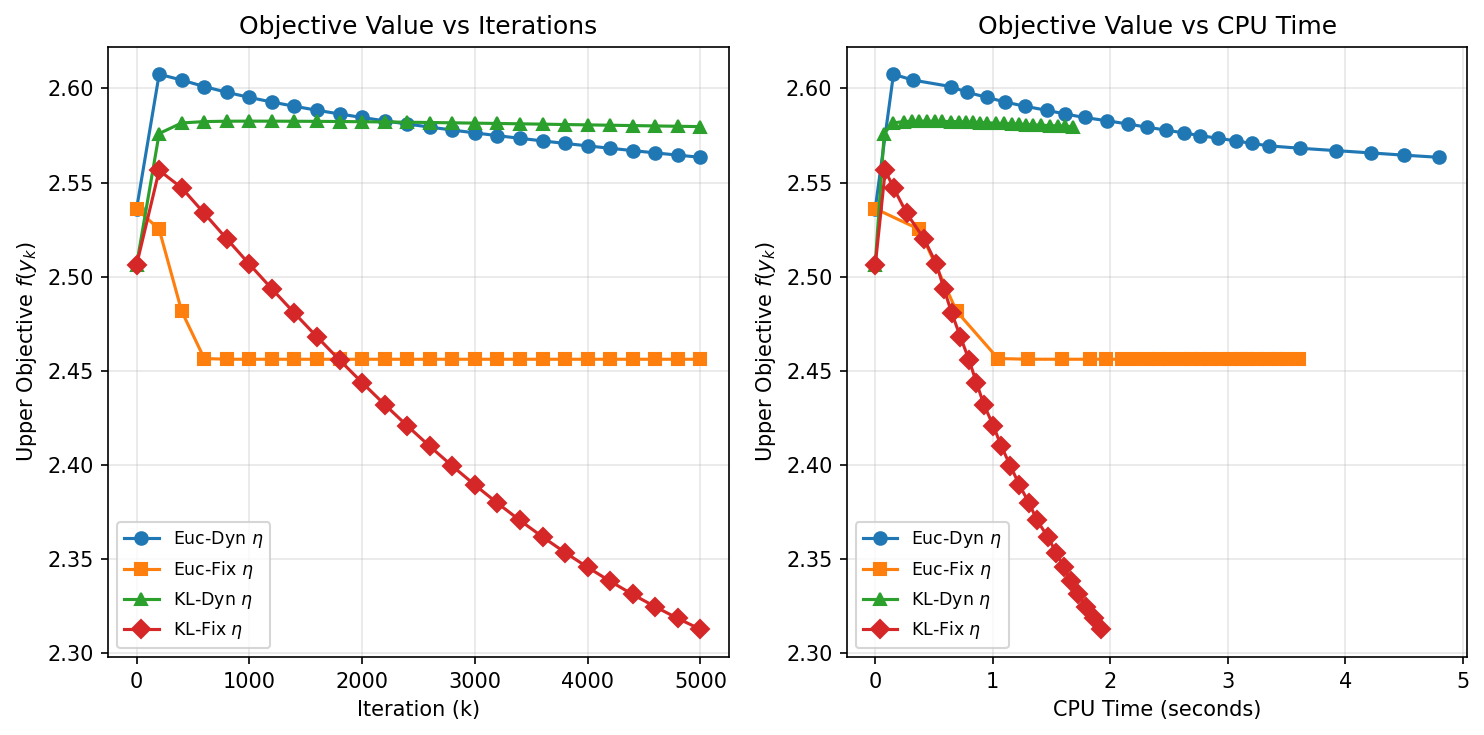}
    \caption{Upper-level objective value}
    \label{fig:nash_value}
\end{figure}
Although the Euclidean variants exhibit small suboptimality values
in Figure~\ref{fig:portfolio_subopt}, Figure~\ref{fig:portfolio_infeasibility}
shows that their natural residuals~\eqref{eq:infeasib} stay at relatively large values.
In contrast, the dual-inertial KL method reduces the equilibrium violation toward zero.
In terms of computational time, the right graph of Figure~\ref{fig:portfolio_infeasibility} shows another advantage. While Euclidean methods rely on $\mathcal{O}(K\log K)$ sorting-based projections per block, the KL framework utilizes explicit $\mathcal{O}(K)$ normalization. It enables the KL variants to compute higher-accuracy equilibria in less time, which demonstrates their efficiency for simplex-constrained games. In addition, Figure~\ref{fig:nash_value} shows that both KL variants attain lower upper-level objective values than their Euclidean counterparts.

\subsection{A traffic equilibrium problem}\label{ssec:traffic}

Similarly to~\cite{alves2025inertial,SamYou25}, we now consider the Nguyen and Dupuis traffic network equilibrium problem. The network, illustrated in Figure~\ref{fig:network}, consists of 13 vertices, 19 arcs, 25 paths, and 4 origin-destination (OD) pairs. Following the standard path-based formulation, the user equilibrium condition is modeled as a Nonlinear Complementarity Problem (NCP).
\begin{figure}[htbp]
    \centering 
    \includegraphics[width=0.6\textwidth]{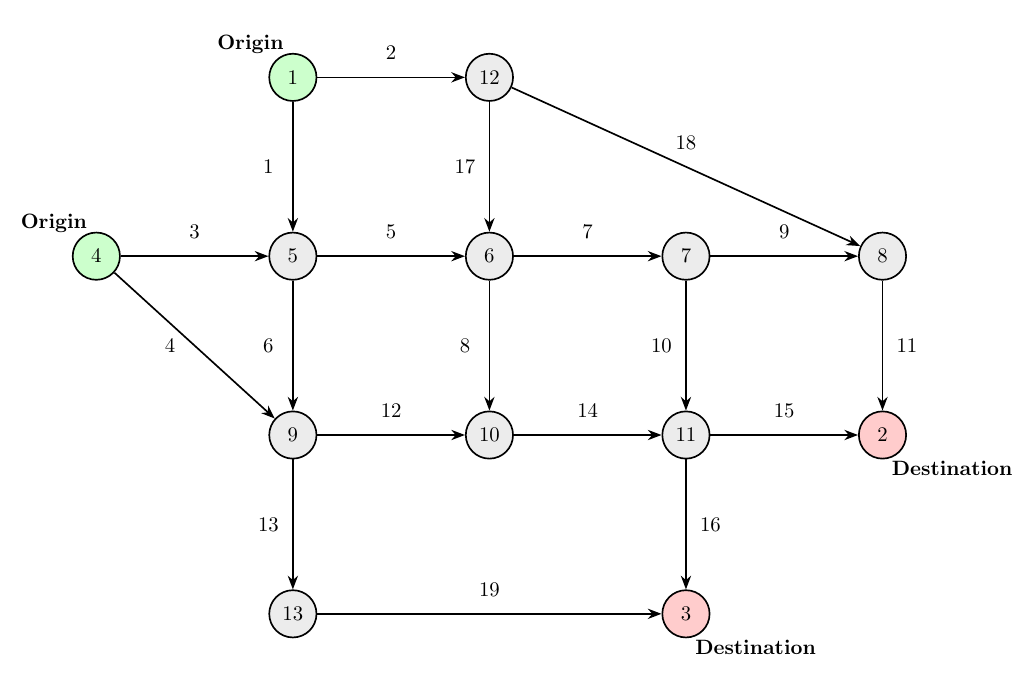} 
    \caption{Nguyen and Dupuis network} 
    \label{fig:network} 
\end{figure}
The goal is to find a state vector $x = [h; u]^\top \in \mathbb{R}^{29}$, where $h \in \mathbb{R}^{25}$ is the vector of traffic flows on the paths, and $u \in \mathbb{R}^{4}$ is the vector of minimum travel costs between the OD pairs. The vector $x$ must satisfy the NCP conditions as follows:
\begin{align} \label{eq:ncp_traffic}
0 \le x \perp F(x) \ge 0,
\end{align}
where $x \perp F(x)$ means $\inner{x}{F(x)} = 0$, and $F \colon \mathbb{R}^{29} \to \mathbb{R}^{29}$ is defined by
\[
F(x) \coloneqq \begin{bmatrix} C(h) - \Omega^\top u \\ \Omega h - d \end{bmatrix}.
\]
Here, $\Omega$ is the OD-path incidence matrix and $d$ is the given travel demand vector. The path cost vector $C(h)$ is derived from the arc travel times via the relation $C(h) = \Delta^\top c(\Delta h)$, where $\Delta$ is the arc-path incidence matrix.
For each arc $a$, the travel time $c_a(\cdot)$ is given by the Bureau of Public Roads (BPR) function:
\[
c_a(\mathcal{F}_a) \coloneqq t_a^0 \left\{1+ 0.15 \left(\frac{\mathcal{F}_a}{cap_a}\right)^{n_a}\right\},
\]
where $\mathcal{F}_a$ is the traffic flow on  $a$, $t_a^0$ is the free-flow time, $cap_a$ is its capacity, and $n_a \ge 1$. It is shown in~\cite[Lemma~6.1]{SamYou25} that the mapping $F$ is monotone when $n_a \ge 0$ for all~$a$. 

The primary objective of this example is to find an equilibrium state that minimizes the total aggregated travel cost across the entire network. Define
\[
f(x) \coloneqq \sum_{i=1}^{25} [C(h)]_i,
\]
which is convex when $n_a \ge 1$ for every arc~$a$ (see \cite[Lemma~6.2]{SamYou25}). The problem is thus stated as finding the best equilibrium, in the following sense:
\begin{align*}
\min \quad f(x)  \qquad
\text{s.t.} \quad x \in \text{SOL}(F,\mathbb{R}^{29}_+),
\end{align*}
where $\text{SOL}(F,\R^{29}_+)$ denotes the solution set of the NCP defined in~\eqref{eq:ncp_traffic}.

\subsubsection{Evaluation metrics:}
\begin{enumerate}
    \item \textbf{Suboptimality (stationarity)}: $D_k = \|\bar{y}_k - \bar{y}_{k-1}\|$.
    
    \item \textbf{Lower-level feasibility (NCP residual):} A merit function to measure constraint feasibility and complementarity, defined as:
    \begin{equation} \label{eq:infeasibility}
      \phi(\bar{y}_{k}) \coloneqq \| \max\{0, -\bar{y}_{k}\} \|^2 + \| \max\{0, -F(\bar{y}_{k})\} \|^2 + |\bar{y}_{k}^\top F(\bar{y}_{k})|.
    \end{equation}
\end{enumerate}

\subsubsection{Results and discussions}

\begin{figure}[ht]
    \centering
    \includegraphics[width=0.8\textwidth]{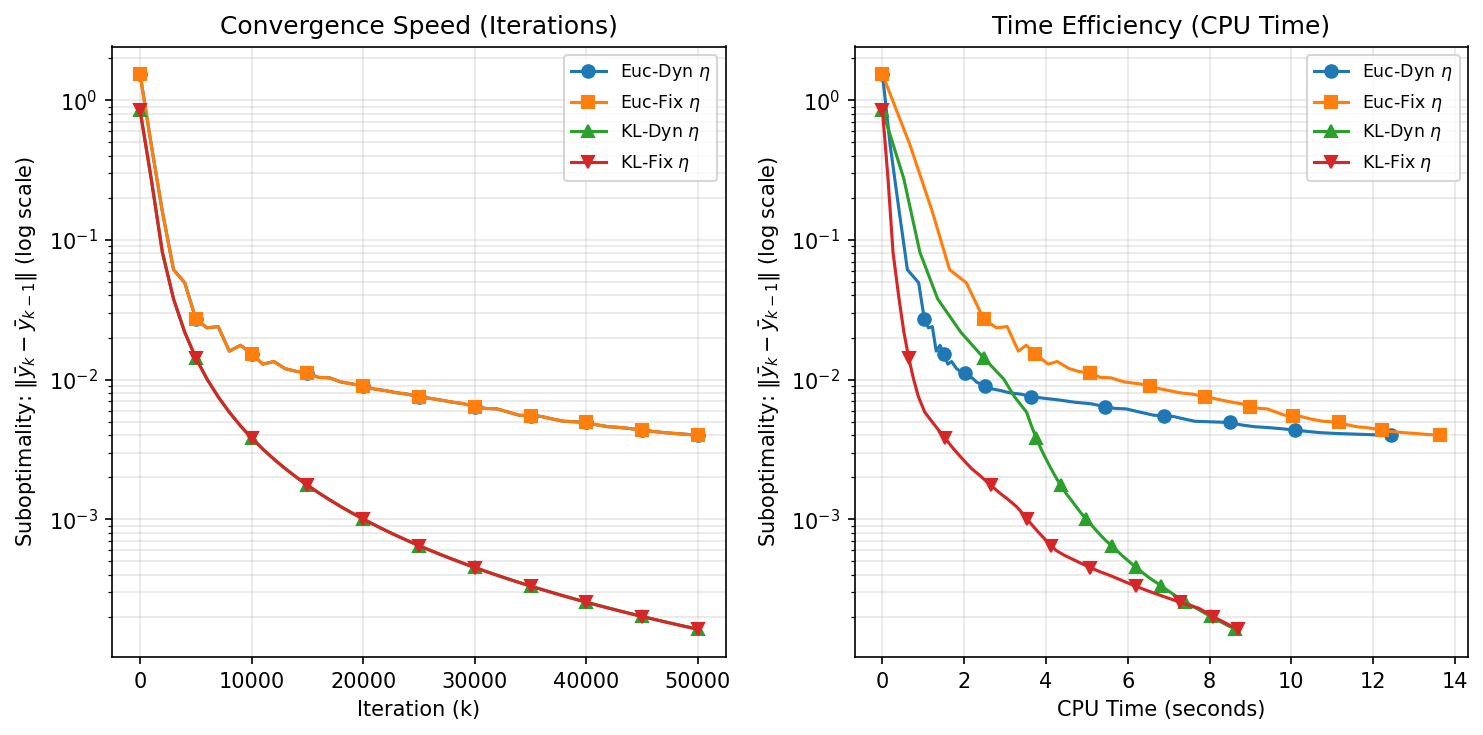}
    \caption{Performance comparison for the traffic network equilibrium problem in terms of suboptimality (stationarity of the ergodic mean $D_k = \|\bar{y}_k - \bar{y}_{k-1}\|$). }
    \label{fig:subopt_iter}
\end{figure}

\begin{figure}[ht]
    \centering
    \includegraphics[width=0.8\textwidth]{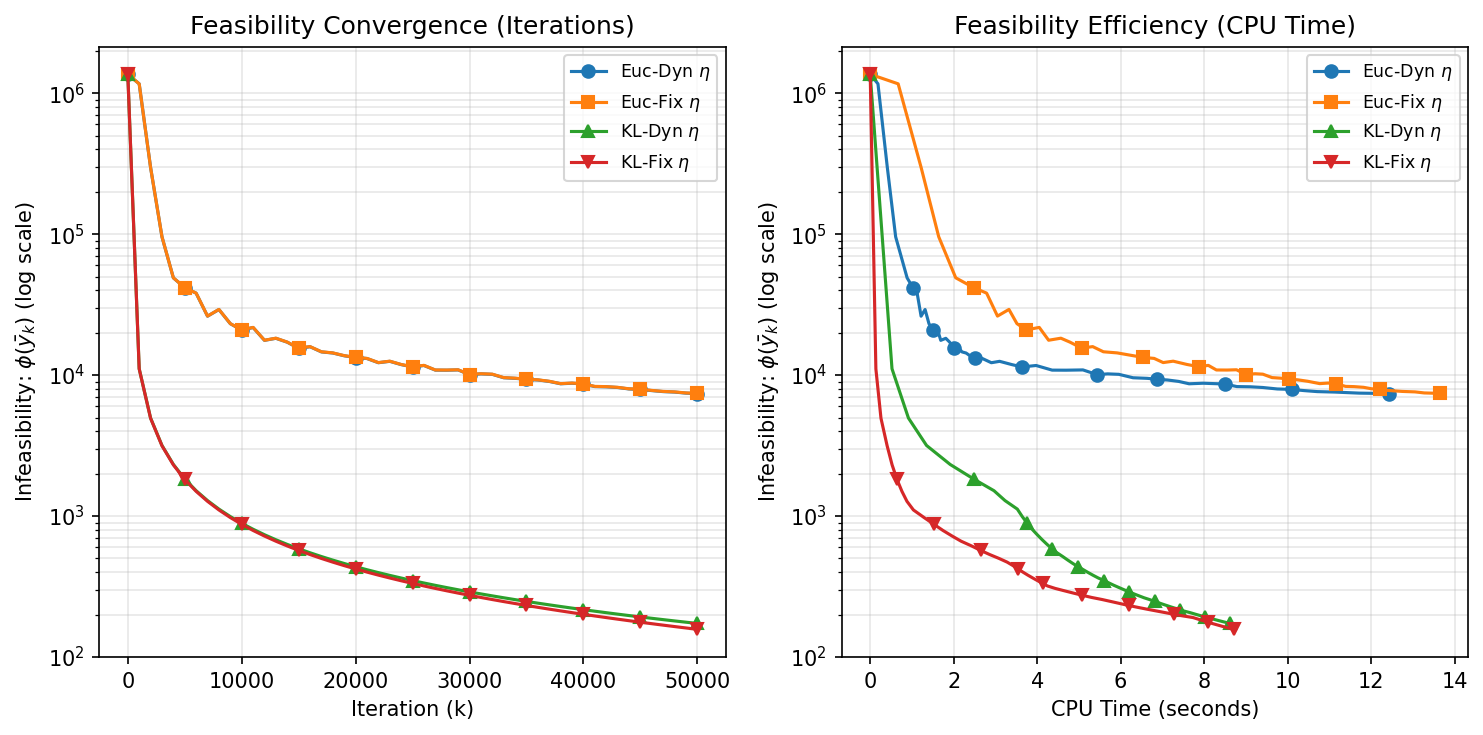}
    \caption{Convergence of the infeasibility $\phi(\bar{y}_k)$ for the traffic network equilibrium problem.  }
    \label{fig:infeas_iter}
\end{figure}

\begin{figure}[ht]
    \centering
    \includegraphics[width=0.8\textwidth]{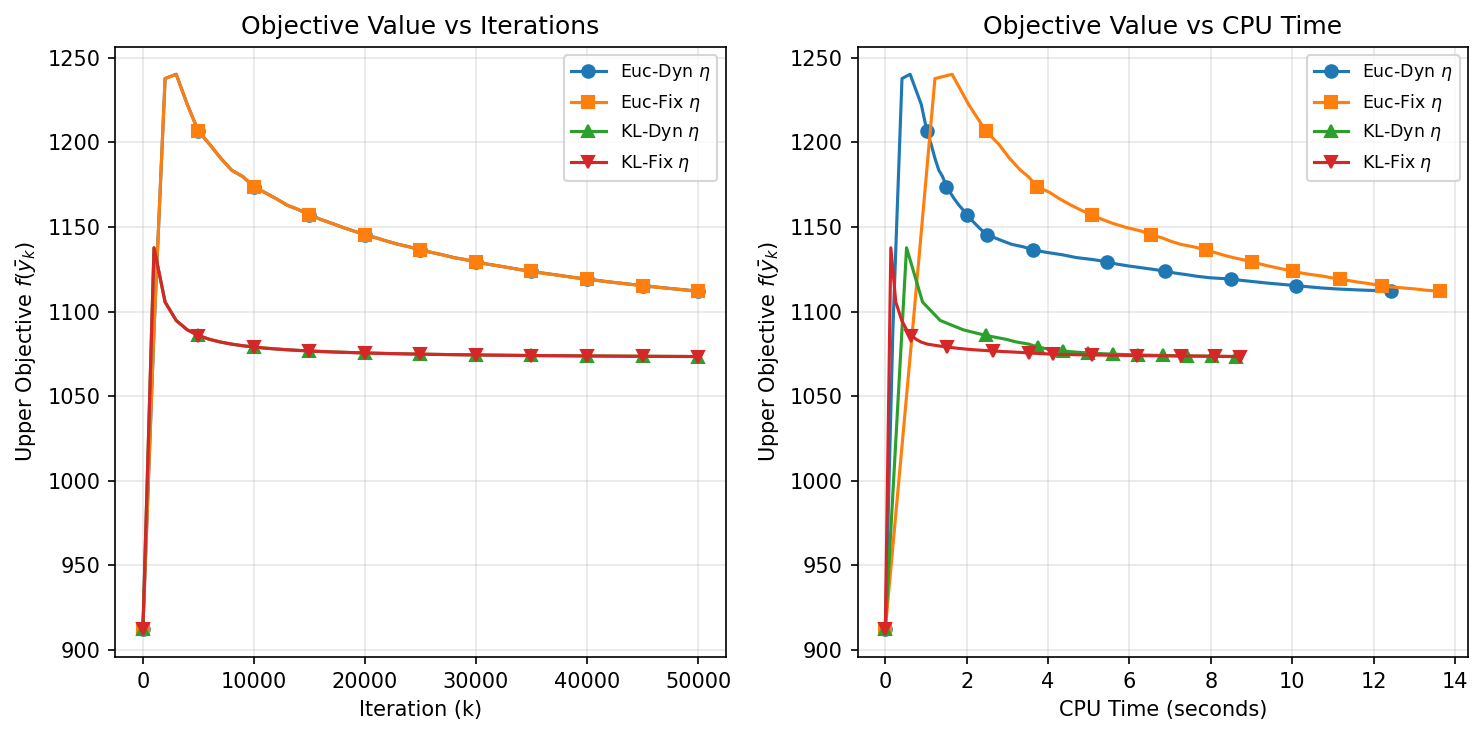}
    \caption{Upper-Level Objective Value}
    \label{fig:value_iter}
\end{figure}

As shown in Figures~\ref{fig:subopt_iter} and~\ref{fig:infeas_iter}, the KL variants exhibit better performance than the Euclidean variants in terms of both suboptimality and infeasibility. In particular, the KL-based methods achieve lower residual values with fewer iterations and less computational time. Moreover, Figure~\ref{fig:value_iter} further confirms that the KL variants attain lower upper-level objective values.

\section{Conclusions}\label{sec:conclusions}

We proposed a Bregman inertial regularized extragradient method for bilevel variational inequalities with inertial extrapolation in the dual space. We show that this framework preserves Bregman three-point identities, which allows the method to provide non-asymptotic complexity bounds under standard monotonicity and Lipschitz assumptions. Our simple numerical experiments show that using Bregman divergence gives computational advantages. In particular, explicit projection-free updates reduce time while guaranteeing interior feasibility. As future work, we plan to extend this dual-inertial framework to settings satisfying relative smoothness conditions~(\cite{LFN18}), which can avoid the global Lipschitz assumption.


\section*{Declarations}


\textbf{Funding} This work was supported by Japan Society for the Promotion of Science with Grant-in-Aid for Early-Career Scientists (JP26K21172), Grants-in-Aid for Scientific Research (C) (JP25K15002) and Grants-in-Aid for Scientific Research (B) (JP25K03082).

\noindent\textbf{Data availability} 
The numerical experiments in Section~\ref{sec:ne} use synthetic data.
The synthetic data-generation procedure and the parameter settings are described in Section~\ref{sssec:data.generation}.
The traffic network model and relevant references are provided in Section~\ref{ssec:traffic}.

\noindent\textbf{Conflict of interest} The authors declare that they have no conflict of interest.

\noindent\textbf{Ethical approval} There are no applicable ethical restrictions.

\noindent\textbf{Informed consent}
Informed consent was not applicable for this study.






\bibliography{reference}

\end{document}